\title[Geometric Control of Spherical Rolling Robot]{Geometric Kinematic Control of a Spherical Rolling Robot}
\author{Tomoki Ohsawa}
\address{Department of Mathematical Sciences, The University of Texas at Dallas, 800 W Campbell Rd, Richardson, TX 75080-3021}
\email{tomoki@utdallas.edu}
\date{\today}
\subjclass[2010]{37J35, 49J15, 70E60, 70Q05, 93B05, 93B27, 93C15}
\keywords{Spherical rolling robot, geometric control, controllability, holonomy, optimal control, integrable systems}
\theoremstyle{plain}
\newtheorem{theorem}{Theorem}[section]
\theoremstyle{definition}
\newtheorem{example}[theorem]{Example}
\theoremstyle{remark}
\newtheorem{remark}[theorem]{Remark}
\def\od#1#2{\dfrac{d#1}{d#2}}
\def\pd#1#2{\dfrac{\partial #1}{\partial #2}}
\def\parentheses#1{{\left(#1\right)}}
\def\brackets#1{{\left[#1\right]}}
\def\Span{\mathop{\mathrm{span}}\nolimits} 
\def\argmax{\mathop{\mathrm{argmax}}}
\def\sn{\mathop{\mathrm{sn}}\nolimits}
\def\norm#1{{\left\|#1\right\|}}
\def\DS{\displaystyle}
\def\R{\mathbb{R}}
\def\defeq{\mathrel{\mathop:}=}
\def\eqdef{=\mathrel{\mathop:}}
\def\setdef#1#2{{\left\{ #1 \ |\ #2 \right\}}}
\def\eps{\varepsilon}
\def\SO{\mathsf{SO}}
\def\so{\mathfrak{so}}
\def\d{\mathbf{d}}
\newcommand\Ad{\operatorname{Ad}}
\newcommand\hor{\operatorname{hor}}
\newcommand\hl{\operatorname{hl}}
\def\PB#1#2{\left\{#1,#2\right\}}
\def\bomega{\boldsymbol{\omega}}
\begin{document}

\footskip=.6in

\begin{abstract}
  We give a geometric account of kinematic control of a spherical rolling robot controlled by two internal wheels just like the toy robot \textit{Sphero}.
  Particularly, we introduce the notion of shape space and fibers to the system by exploiting its symmetry and the principal bundle structure of its configuration space; the shape space encodes the rotational angles of the wheels, whereas each fiber encodes the translational and rotational configurations of the robot for a particular shape.
  We show that the system is fiber controllable---meaning any translational and rotational configuration modulo shapes is reachable---as well as find exact expressions of the geometric phase or holonomy under some particular controls.
  We also solve an optimal control problem of the spherical robot, show that it is completely integrable, and find an explicit solution of the problem.
\end{abstract}

\maketitle

\section{Introduction}
\subsection{Spherical Rolling Robot}
A spherical rolling robot is a simple robot that has been studied extensively in many different forms from both theoretical and experimental points of view; see, e.g., \citet{BhAg2000} for several different types of realizations.
One of the realizations is the \textit{Sphericle} developed by \citet{BiBaPrGo1997}; it is a spherical rolling robot controlled by two internal wheels inside the spherical shell of the robot.
The \textit{Sphero}\textsuperscript{\textregistered} (see Fig.~\ref{fig:Sphero}) is a commercial realization of the \textit{Sphericle}; it is controlled by two internal wheels (white wheels near the bottom) actuated by motors in the electromechanical unit inside the sphere (the blue wheels above are idler wheels to sustain the unit).
\begin{figure}[hbtp]
  \centering
  \includegraphics[width=0.325\linewidth]{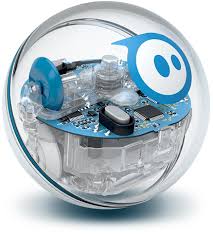}
  \caption{Spherical rolling robot \textit{Sphero}\textsuperscript{\textregistered}; see \url{http://www.sphero.com}.}
  \label{fig:Sphero}
\end{figure}

Despite its relative simplicity in design and configurations, a spherical rolling robot like the \textit{Sphero} has fairly complex motions due to its nonholonomic nature of the constraints.
In fact, the \textit{Sphero} comes with an interface that enables one to control it by using a cellphone app as well as both visual and traditional programming languages, and hence is an effective STEM education toy that teaches students basic ideas in computer programming, mechanics, and control theory.

\subsection{Main Results and Outline}
We study the kinematics of the spherical rolling robot like the \textit{Sphericle} or \textit{Sphero} from the geometric point of view.
Particularly, we exploit the symmetry of the kinematic model of the robot and the notion of \textit{shape space} (see, e.g., \citet{Mo1991a,Mo1993a,Mo1993b} and \citet{KeMu1995}), and analyze its controllability as well as its optimal control problem.

We first formulate the kinematic equation describing the nonholonomic constraints of the system in Section~\ref{sec:kinematics}.
The resulting model is effectively the same as that of the \textit{Sphericle} in \citet{BiBaPrGo1997}.

The main difference from their approach is that we stress the role of symmetry and formulate the system on a principal bundle; see Section~\ref{sec:geometry}.
In other words, we split the configuration space into the \textit{shape space} (configurations of the wheels or the internal system) and the \textit{fiber} (the symmetry group or the translational and rotational configurations of the sphere).
The system has a fully-actuated subsystem in the shape space, but the rest of the system in the direction of the fiber is not directly actuated and is defined by the constraint of the system.
However one is mainly concerned with the behavior of the system in the fiber.
This is the basic geometric setting for the Falling Cat Problem~\cite{Mo1993a} as well as robotic locomotion~\cite{KeMu1995}.

This leads to the question of fiber controllability~\cite{KeMu1995} of the robot in Section~\ref{sec:fiber_controllability}, i.e., whether the system is controllable in the fiber regardless of its shape.
We show that the system is fiber controllable by finding the curvature of its principal connection (Theorem~\ref{thm:fiber_controllability}).

We also demonstrate a couple of instances of \textit{holonomy} or \textit{geometric phase} in Section~\ref{sec:curvature_and_locomotion}.
A holonomy or geometric phase is the displacement in the fiber when the control system makes a loop in the shape space, i.e., when the shape of the system undergoes a change and eventually comes back to the original one.
We find exact expressions for translational and rotational holonomies under certain control laws that may be useful for motion planning.

Finally, in Section~\ref{sec:optimal_control}, we formulate an optimal control problem of the robot, and show that the system resulting from the Pontryagin Maximum Principle is completely integrable, as well as obtain an explicit solution to the problem (Theorem~\ref{thm:integrability}).

\section{Robot Kinematics}
\label{sec:kinematics}
\subsection{Simple Kinematic Model of \textit{Sphero}}
\label{ssec:kinematic_model}
We model the rolling robot under the following simplifying assumptions:
\begin{enumerate}[(i)]
\item The model is kinematic. (See, e.g., \cite{BiBaPrGo1997,Sc2002,ShScBl2008,PuRo2017,IlMoVl2017} for \textit{dynamical} studies of rolling robots.)
\item The electromechanical unit inside the robot always maintains its horizontal position.
\item There is no slip between the sphere and the ground in the sense that the contact points of both surfaces have the same velocity.
  \label{asmptn:no-slip1}
\item There is no slip between the sphere and the internal wheels in the same sense.
\end{enumerate}
It results in an essentially the same model as the \textit{Sphericle} developed by \citet{BiBaPrGo1997}.

Regarding the second assumption, the electromechanical unit has a ballast weight at the bottom and is much heavier than the spherical shell.
When the robot is in fast motion, the unit tilts and wobbles inside the sphere due to inertia.
However, when it is in slow motion, the unit stays at the bottom more or less maintaining its horizontal position at the bottom of the sphere due to its heavy weight relative to the spherical shell.
So we would like to model the kinematics of the robot assuming that the electromechanical unit can only rotate about the vertical axis.
In other words, we think of the robot as a spherical robot maneuvered by a two-wheeled unit rotating inside the spherical shell maintaining its horizontal position.

Regarding the third and fourth assumptions, these condition impose nonholonomic (rolling) constraints on the robot that define the kinematic system to consider; see Section~\ref{ssec:no-slip_constraints}.
Note that the third condition does not prevent the sphere from rotating about the vertical axis; see Section~\ref{ssec:additional_constraint} below.

\subsection{Kinematics of Rolling Sphere}
Let us first consider the kinematics of the sphere itself.
Consider the motion of a sphere with radius $r$ rolling on the plane $x_{3} = 0$ in the spatial frame $\R^{3} = \{(x_{1},x_{2},x_{3})\}$.
Following \citet{Ju1993} (see also \cite[Section~4.1]{Ju1997}), we describe the kinematics of the sphere as follows:
Let $\mathcal{S} \defeq \setdef{ \bar{\mathbf{q}} \in \R^{3}}{ \norm{\bar{\mathbf{q}}} = r }$ be the sphere in the body frame of the sphere.
The configuration of the rolling sphere is specified by the position of the center $\mathbf{x}_{\rm c} = (\mathbf{x},r) \in \R^{3}$ in the spatial frame with $\mathbf{x} = (x_{1},x_{2}) \in \R^{2}$ as well as the rotation matrix $R \in \SO(3)$ that specifies the orientation of the sphere in the spatial frame.
Hence the configuration space of the rolling sphere is $\SO(3) \times \R^{2} = \{(R,\mathbf{x})\}$.

Let $\bar{\mathbf{q}} \in \mathcal{S}$ be an arbitrary point on the sphere in the \textit{body} frame; see the sphere on the left in Fig.~\ref{fig:RollingSphere}.
For any given configuration $(R, \mathbf{x}) \in \SO(3) \times \R^{2}$ of the sphere, the position of the point of the sphere in the \textit{spatial} frame would be
\begin{equation*}
  \mathbf{q} \defeq \mathbf{x}_{\rm c} + R\bar{\mathbf{q}}
\end{equation*}
as shown on the right in Fig.~\ref{fig:RollingSphere}.
\begin{figure}[hbtp]
  \centering
  \includegraphics[width=0.75\linewidth]{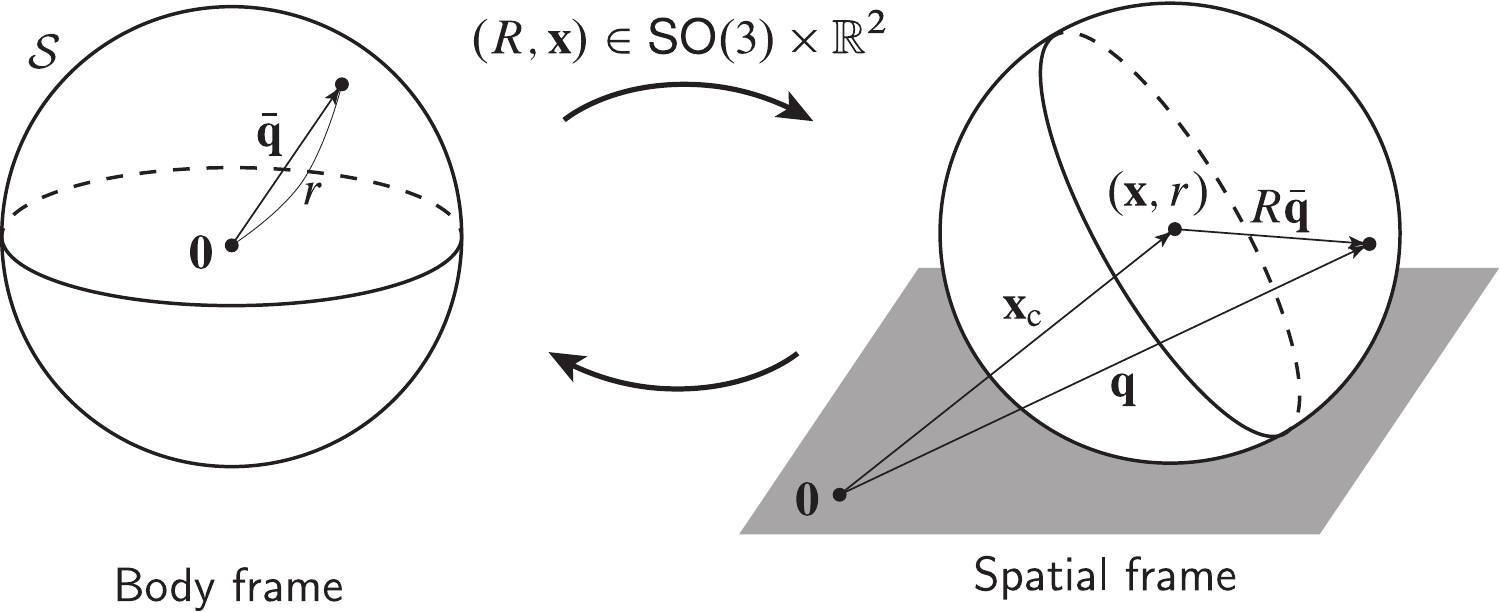}
  \caption{Rolling Sphere in the body and spatial frames}
  \label{fig:RollingSphere}
\end{figure}
We may write the velocity of the point $\mathbf{q}$ (in the spatial frame) in terms of $(\dot{R},\dot{\mathbf{x}}) \in T_{(R,\mathbf{x})}(\SO(3) \times \R^{2})$ as
\begin{equation}
  \label{eq:dotq}
  \dot{\mathbf{q}} 
  =
  \begin{bmatrix}
    \dot{\mathbf{x}} \\
    0
  \end{bmatrix}
  + \dot{R}\bar{\mathbf{q}}
  = \begin{bmatrix}
    \dot{\mathbf{x}} \\
    0
  \end{bmatrix}
  + \hat{\bomega} R\bar{\mathbf{q}}
  \eqdef f(R, \hat{\bomega}, \dot{\mathbf{x}}; \bar{\mathbf{q}}),
\end{equation}
where $\hat{\bomega}$ is the angular velocity in the spatial frame, i.e.,
\begin{equation*}
  \hat{\bomega} =
  \begin{bmatrix}
    0 & -\omega_{3} & \omega_{2} \\
    \omega_{3} & 0 & -\omega_{1} \\
    -\omega_{2} & \omega_{1} & 0 
  \end{bmatrix}
  \defeq \dot{R}R^{T} \in \so(3).
\end{equation*}
This is an example of the so-called ``hat map'' $\hat{(\,\cdot\,)}\colon \R^{3} \to \so(3)$ (see, e.g., \citet[Eq.~(9.2.7) on p.~289]{MaRa1999}) defined by
\begin{equation}
  \label{eq:hat_map}
  \mathbf{a} =
  \begin{bmatrix}
    a_{1} \\
    a_{2} \\
    a_{3}
  \end{bmatrix}
  \mapsto
  \hat{\mathbf{a}} =
  \begin{bmatrix}
    0 & -a_{3} & a_{2} \\
    a_{3} & 0 & -a_{1} \\
    -a_{2} & a_{1} & 0
  \end{bmatrix}.
\end{equation}

\subsection{No-slip Constraints}
\label{ssec:no-slip_constraints}
The no-slip condition of the contact point of the sphere with the plane imposes a nonholonomic constraint as follows:
For any given configuration $(R,\mathbf{x}) \in \SO(3) \times \R^{2}$ of the sphere, the contact point $\bar{\mathbf{q}}_{\rm c}(R)$ in the body frame satisfies $R\bar{\mathbf{q}}_{\rm c}(R) = -r\mathbf{e}_{3}$, i.e., $\bar{\mathbf{q}}_{\rm c}(R) = -r R^{T}\mathbf{e}_{3}$.
Hence, in view of \eqref{eq:dotq}, the velocity of the contact point in the spatial frame is:
\begin{equation*}
  f(R, \hat{\bomega}, \dot{\mathbf{x}}; \bar{\mathbf{q}}_{\rm c}(R))
  = f\parentheses{ R, \hat{\bomega}, \dot{\mathbf{x}}; -r R^{T}\mathbf{e}_{3} }
  = \begin{bmatrix}
    \dot{\mathbf{x}} \\
    0
  \end{bmatrix}
  - r\hat{\bomega}\mathbf{e}_{3}.
\end{equation*}
The no-slip condition says this vanishes, i.e.,
\begin{equation}
  \label{eq:no-slip}
  \dot{\mathbf{x}} =
  \begin{bmatrix}
    \dot{x}_{1} \\
    \dot{x}_{2}
  \end{bmatrix}
  = r
  \begin{bmatrix}
    \omega_{2} \\
    -\omega_{1}
  \end{bmatrix}.
\end{equation}

Let us now consider the kinematics of the robot, particularly the interaction between the sphere and the internal wheels.
Let $\psi \in \mathbb{S}^{1}$ be the angle of rotation of the two-wheeled unit measured from the positive part of the $x_{1}$-axis (of the spatial frame); see the left of Fig.~\ref{fig:Configuration}.
Let $\mathbf{y}_{\rm w}^{(i)} \in \R^{3}$ with $i = 1, 2$ be the position---relative to the center of the sphere in the body frame---of the contact point of wheel $i$ to the sphere in the body frame as shown on the right in Fig.~\ref{fig:Configuration}; the wheels are numbered as shown in the figure.
\begin{figure}[hbtp]
  \centering
  \includegraphics[width=0.75\linewidth]{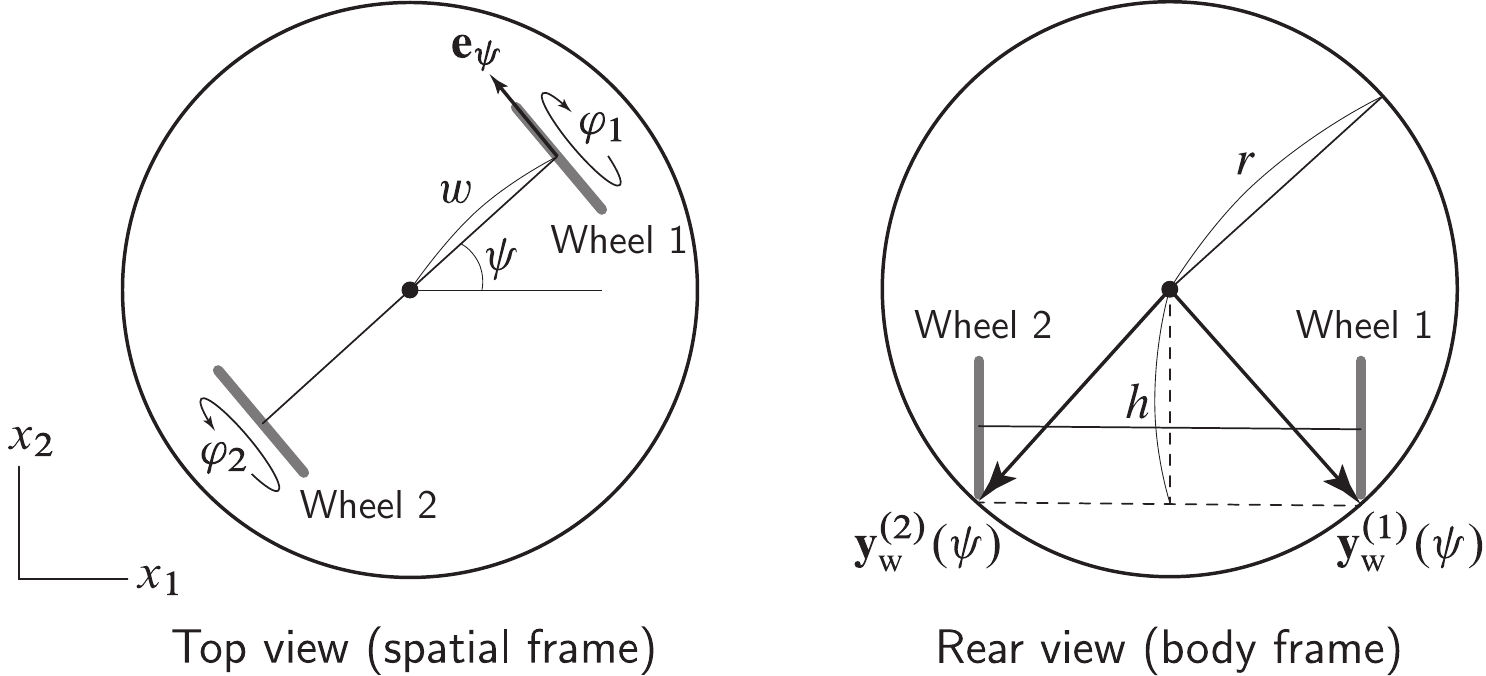}
  \caption{Configuration of Robot}
  \label{fig:Configuration}
\end{figure}
Let $2w$ be the track width, $h$ be the distance between the center of the sphere and the horizontal plane defined by the contact points of the wheels.
Then it is easy to see that
\begin{equation*}
  \mathbf{y}_{\rm w}^{(1)}(\psi) \defeq
  \begin{bmatrix}
    w \cos\psi \\
    w \sin\psi \\
    -h
  \end{bmatrix},
  \qquad
  \mathbf{y}_{\rm w}^{(2)}(\psi) \defeq
  \begin{bmatrix}
    -w \cos\psi \\
    -w \sin\psi \\
    -h
  \end{bmatrix}.
\end{equation*}

In addition to the no-slip condition of the sphere itself described above, the above model of the rolling robot imposes additional no-slip constraints at the contact points of the wheels to the sphere.
The constraints are simply that the velocity of the contact point of each wheel must match that of the sphere.

Let us first find the velocities of the contact points of the wheels.
Let $\rho$ be the radius of the wheels.
Then the positions of the contact points of the wheels in the spatial frame are
\begin{equation*}
  \mathbf{q}_{\mathrm{w}}^{(i)} \defeq \mathbf{x}_{c} + \mathbf{y}_{\rm w}^{(i)}(\psi)
  \quad
  \text{for}
  \quad
  i = 1, 2.
\end{equation*}
Then the velocity (in the spatial frame) of each wheel is the composition of the translational and rotational velocities of the electromechanical unit and the rotational velocity of the wheel itself; see Fig.~\ref{fig:Wheels}.
Hence the velocities of the contact points of the wheels in the spatial frame are given by
\begin{equation*}
  \dot{\mathbf{q}}_{\rm w}^{(1)} \defeq \dot{\mathbf{x}}_{\rm c} + (w\dot{\psi} - \rho\dot{\varphi}_{1})\mathbf{e}_{\psi},
  \qquad
  \dot{\mathbf{q}}_{\rm w}^{(2)} \defeq \dot{\mathbf{x}}_{\rm c} - (w\dot{\psi} + \rho\dot{\varphi}_{2})\mathbf{e}_{\psi},
\end{equation*}
where $\mathbf{e}_{\psi} \defeq (-\sin\psi, \cos\psi, 0)^{T}$ and is shown on the left in Fig.~\ref{fig:Configuration}.
\begin{figure}[hbtp]
  \centering
  \includegraphics[width=0.65\linewidth]{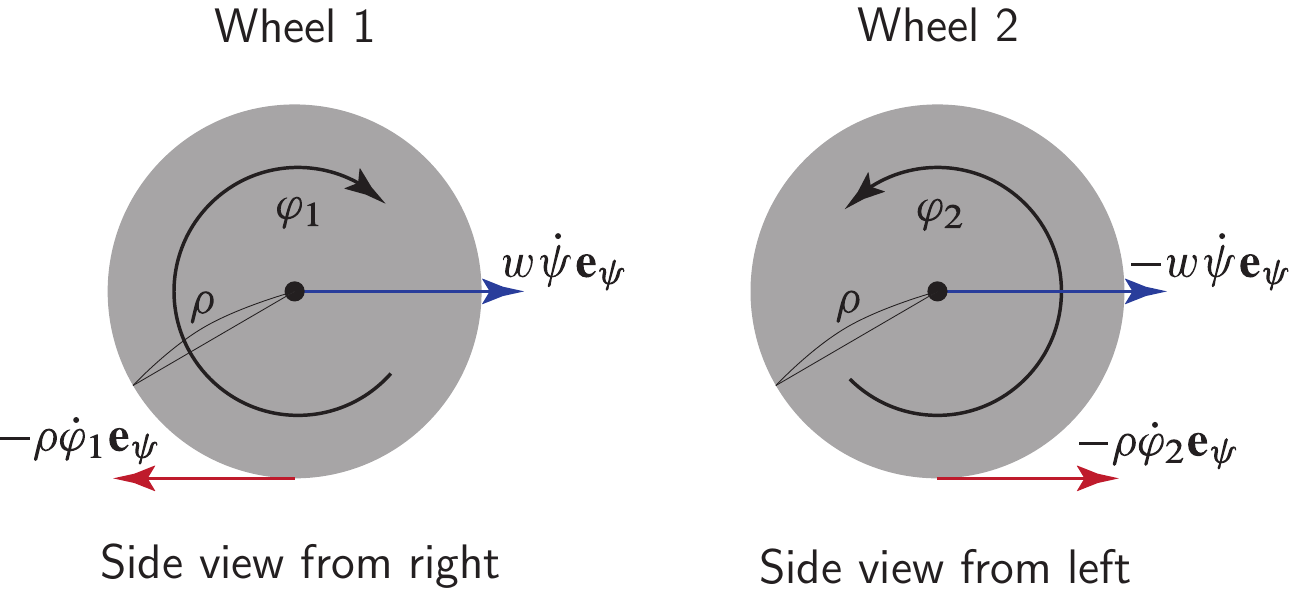}
  \caption{Side views of wheels}
  \label{fig:Wheels}
\end{figure}

On the other hand, the positions of the contact points of the sphere in the body frame is
\begin{equation*}
  \bar{\boldsymbol{q}}_{\rm s}^{(i)}(R,\psi) \defeq R^{T} \mathbf{y}_{\rm w}^{(i)}(\psi)
  \quad
  \text{for}
  \quad
  i = 1, 2,
\end{equation*}
and so the velocities of these contact points in the spatial frame are, using \eqref{eq:dotq},
\begin{equation*}
  \dot{\mathbf{q}}_{\rm s}^{(i)}
  \defeq f\parentheses{ R, \hat{\bomega}, \dot{\mathbf{x}}; \bar{\boldsymbol{q}}_{\rm s}^{(i)}(R,\psi) }
  = \begin{bmatrix}
    \dot{\mathbf{x}} \\
    0
  \end{bmatrix}
  + \hat{\bomega}\mathbf{y}_{\rm w}^{(i)}(\psi).
\end{equation*}
The constraints $\dot{\mathbf{q}}_{\rm s}^{(i)} = \dot{\mathbf{q}}_{\rm w}^{(i)}$ with $i = 1,2$ then yield
\begin{equation*}
  \hat{\bomega}\mathbf{y}_{\rm w}^{(1)}(\psi) = (w\dot{\psi} - \rho\dot{\varphi}_{1})\mathbf{e}_{\psi},
  \qquad
  \hat{\bomega}\mathbf{y}_{\rm w}^{(2)}(\psi) = -(w\dot{\psi} + \rho\dot{\varphi}_{2})\mathbf{e}_{\psi}.
\end{equation*}
However, noting that the hat map~\eqref{eq:hat_map} satisfies, for any $\mathbf{a}, \mathbf{b} \in \R^{3}$,
\begin{equation*}
  \hat{\mathbf{a}} \mathbf{b} = \mathbf{a} \times \mathbf{b} = - \mathbf{b} \times \mathbf{a} = - \hat{\mathbf{b}} \mathbf{a},
\end{equation*}
we have
\begin{equation*}
  -\widehat{\mathbf{y}_{\rm w}^{(1)}(\psi)}\, \bomega = (w\dot{\psi} - \rho\dot{\varphi}_{1})\mathbf{e}_{\psi},
  \qquad
  -\widehat{\mathbf{y}_{\rm w}^{(2)}(\psi)}\, \bomega = -(w\dot{\psi} + \rho\dot{\varphi}_{2})\mathbf{e}_{\psi},
\end{equation*}
or
\begin{equation*}
  Y \bomega = \mathbf{b}
  \quad
  \text{with}
  \quad
  Y \defeq -
  \begin{bmatrix}
    \widehat{\mathbf{y}_{\rm w}^{(1)}(\psi)} \medskip\\
    \widehat{\mathbf{y}_{\rm w}^{(2)}(\psi)}
  \end{bmatrix}
  \in \R^{6 \times 3},
  \quad
  \mathbf{b} \defeq 
  \begin{bmatrix}
    (w\dot{\psi} - \rho\dot{\varphi}_{1})\mathbf{e}_{\psi} \\
    -(w\dot{\psi} + \rho\dot{\varphi}_{2})\mathbf{e}_{\psi}
  \end{bmatrix}.
\end{equation*}
Solving this linear system, we obtain $\bomega = (Y^{T}Y)^{-1}Y^{T} \mathbf{b}$ or
\begin{align}
  \label{eq:constraint-omega12}
  \begin{bmatrix}
    \omega_{1} \\
    \omega_{2}
  \end{bmatrix}
  &= -\frac{\rho}{2h}(\dot{\varphi}_{1} + \dot{\varphi}_{2})
  \begin{bmatrix}
    \cos\psi \\
    \sin\psi
  \end{bmatrix},\\
  \label{eq:constraint-omega3}
  \omega_{3} &= \dot{\psi} - \frac{\rho}{2w}(\dot{\varphi}_{1} - \dot{\varphi}_{2}).
\end{align}

\subsection{Additional Constraint}
\label{ssec:additional_constraint}
We impose one more constraint:
The total angular momentum of the robot about the vertical axis passing though the center of the sphere is conserved.
Recall that the no-slip assumption~\eqref{asmptn:no-slip1} from Section~\ref{ssec:kinematic_model} does not prevent the sphere from rotating about the vertical axis.
Accordingly, we make an (ideal) assumption that there is no friction for such rotations.
Assuming no spinning initially, this amounts to setting the total angular momentum to be zero.
Let $I_{\rm s}$ be the moment of inertia of the sphere about any axis passing through the center (assuming that the mass distribution on the surface of the sphere is homogeneous) and $J$ be that of the electromechanical unit.
Then the constraint is given by
\begin{equation*}
  I_{\rm s} \omega_{3} + J\dot{\psi} = 0.
\end{equation*}
Solving the above constraint equation coupled with \eqref{eq:constraint-omega3} for $\omega_{3}$ and $\dot{\psi}$, we have
\begin{equation}
  \label{eq:omega3-dotvarphi}
  \omega_{3} = -\frac{c J}{I_{\rm s}}(\dot{\varphi}_{1} - \dot{\varphi}_{2}),
\end{equation}
and
\begin{equation}
  \label{eq:dotpsi-dotvarphi}
  \dot{\psi} = c(\dot{\varphi}_{1} - \dot{\varphi}_{2}),
\end{equation}
where we defined
\begin{equation*}
  c \defeq \frac{\rho\,I_{\rm s}}{2w(I_{\rm s} + J)}.
\end{equation*}
Equation~\eqref{eq:dotpsi-dotvarphi} is a holonomic constraint on the variables $(\varphi_{1}, \varphi_{2}, \psi)$ that can be integrated easily:
\begin{equation}
  \label{eq:psi-varphi}
  \psi = c(\varphi_{1} - \varphi_{2}),
\end{equation}
where we set, without loss of generality, $\psi(0) = \varphi_{1}(0) = \varphi_{2}(0) = 0$.
Hence we may eliminate $\psi$ from the formulation by using the holonomic constraint~\eqref{eq:psi-varphi}.
Note that setting $\psi(0) = 0$ means that the $x_{1}$-axis is aligned with the axis of the wheels in the initial configuration; see Fig.~\ref{fig:Configuration}.

\begin{remark}
  The above no-friction assumption for rotations of the sphere about the vertical axis is reasonable if the surface on which the robot is rolling is very smooth.
  One may need to adjust it slightly to take frictions into account depending on how rough the surface is.
  If the surface is very rough, one may assume that the moment of inertia of the sphere $I_{\rm s}$ is much larger than that of the electromechanical unit and so one may take the limit $J/I_{s} \to 0$ to have $c = \rho/(2w)$ instead.
\end{remark}

\section{Geometry of Robot Kinematics}
\label{sec:geometry}
\subsection{Kinematic Control System}
Let us define the configuration space of the robot as
\begin{equation*}
  Q \defeq \mathbb{S}^{1} \times \mathbb{S}^{1} \times \SO(3) \times \R^{2} = \{(\varphi_{1},\varphi_{2},R,\mathbf{x})\}.
\end{equation*}
Then the no-slip constraints \eqref{eq:no-slip} and \eqref{eq:constraint-omega12} along with \eqref{eq:omega3-dotvarphi} define the following nonholonomic constraints on $Q$:
\begin{subequations}
  \label{eq:kinematic_control_system}
  \begin{equation}
    \label{eq:R-omega}
    \dot{R} = \hat{\bomega} R
  \end{equation}
  with
  \begin{equation}
    \label{eq:nonholonomic_constraints}
    \begin{array}{l}
      \DS
      \bomega = 
      \begin{bmatrix}
        \omega_{1} \\
        \omega_{2} \\
        \omega_{3}
      \end{bmatrix}
      =
      \begin{bmatrix}
        -\frac{\rho}{2h}\cos(c(\varphi_{1} - \varphi_{2})) \\
        -\frac{\rho}{2h}\sin(c(\varphi_{1} - \varphi_{2})) \\
        -{c J}/{I_{\rm s}}
      \end{bmatrix}\dot{\varphi}_{1}
      +
      \begin{bmatrix}
        -\frac{\rho}{2h}\cos(c(\varphi_{1} - \varphi_{2})) \\
        -\frac{\rho}{2h}\sin(c(\varphi_{1} - \varphi_{2})) \\
        {c J}/{I_{\rm s}}
      \end{bmatrix}\dot{\varphi}_{2},
      \qquad
      \medskip\\
      \DS
      \dot{\mathbf{x}} = 
      \begin{bmatrix}
        \dot{x}_{1} \\
        \dot{x}_{2}
      \end{bmatrix}
      = \frac{r\rho}{2h}(\dot{\varphi}_{1} + \dot{\varphi}_{2})
      \begin{bmatrix}
        -\sin(c(\varphi_{1} - \varphi_{2})) \\
        \cos(c(\varphi_{1} - \varphi_{2}))
      \end{bmatrix}.
    \end{array}
  \end{equation}
  Assuming that one can control the angular velocity of the wheels, we may define a kinematic control system for the robot by the nonholonomic constraints \eqref{eq:R-omega} and \eqref{eq:nonholonomic_constraints} coupled with
  \begin{equation}
    \label{eq:shape_system}
    \dot{\varphi}_{1} = u_{1},
    \qquad
    \dot{\varphi}_{2} = u_{2}.
  \end{equation}
\end{subequations}
As a result, \eqref{eq:kinematic_control_system} defines a kinematic control system.

\subsection{Geometry of Kinematic Control System}
The above nonholonomic constraints \eqref{eq:R-omega} and \eqref{eq:nonholonomic_constraints} define a distribution $\mathcal{H}$ on $Q$, i.e., at each point $q = (\varphi_{1},\varphi_{2},R,\mathbf{x})$ of $Q$,
\begin{equation*}
  \mathcal{H}_{q} \defeq \setdef{ (\dot{\varphi}_{1},\dot{\varphi}_{2},\dot{R},\dot{\mathbf{x}}) \in T_{q}Q }{ \eqref{eq:R-omega} \text{ and } \eqref{eq:nonholonomic_constraints} }
\end{equation*}
defines a subspace of the tangent space $T_{q}Q$.
Practically speaking, $\mathcal{H}_{q}$ is the space of admissible velocities of the robot at the configuration $q \in Q$.

Now, let $\mathsf{G} \defeq \SO(3) \times \R^{2} = \{ (R,\mathbf{x}) \}$ and $\Phi\colon \mathsf{G} \times Q \to Q$ be the natural (right) action of $\mathsf{G} \defeq \SO(3) \times \R^{2}$ on the $\mathsf{G}$-component of $Q$, i.e.,
\begin{equation}
  \label{eq:Phi}
  \Phi_{(R_{0},\mathbf{x}_{0})}(\varphi_{1}, \varphi_{2}, R, \mathbf{x}) \defeq (\varphi_{1}, \varphi_{2}, R R_{0}, \mathbf{x} + \mathbf{x}_{0}).
\end{equation}
This gives rise to the principal bundle
\begin{equation*}
  \pi\colon Q \to Q/\mathsf{G};
  \quad
  (\varphi_{1}, \varphi_{2}, R, \mathbf{x}) \mapsto (\varphi_{1}, \varphi_{2}),
\end{equation*}
where the base space
\begin{equation*}
  S \defeq Q/\mathsf{G} = \mathbb{S}^{1} \times \mathbb{S}^{1} = \{ (\varphi_{1}, \varphi_{2}) \}  
\end{equation*}
is the so-called \textit{shape space}, i.e., the space of all possible angles of rotation of the two wheels.
Note that $Q$ is a trivial bundle, i.e., $Q = S \times \mathsf{G}$.
In what follows, we will write
\begin{equation*}
  \varphi = (\varphi_{1}, \varphi_{2}) \in S,
  \qquad
  g = (R,\mathbf{x}) \in \mathsf{G},
  \qquad
  q = (\varphi,g) = (\varphi_{1}, \varphi_{2}, R, \mathbf{x}) \in Q
\end{equation*}
for short.
Then we may write the above group action as $\Phi_{g_{0}}(\varphi, g) = (\varphi, g g_{0})$ for any $g_{0} \in \mathsf{G}$.

One can easily show that the distribution $\mathcal{H}$ is invariant under the tangent lift of $\Phi$ in the sense that $T_{q}\Phi_{g}(\mathcal{H}_{q}) = \mathcal{H}_{\Phi_{g}(q)}$ for any $q \in Q$ and any $g \in \mathsf{G}$; in fact, $\hat{\bomega} \defeq \dot{R}R^{-1}$ is clearly invariant under the right action of $\SO(3)$, and the translational symmetry in $\R^{2}$ is trivial.
Let $\mathcal{V}_{q}$ be the tangent space at $q$ to the orbit $\mathcal{O}(q) \defeq \setdef{\Phi_{g}(q)}{g \in \mathsf{G}}$ of the action $\Phi$, i.e., $\mathcal{V}_{q} \defeq T_{q}\mathcal{O}(q)$.
Then it is easy to see that it is a complementary subspace of $\mathcal{H}_{q}$, i.e., $T_{q}Q = \mathcal{H}_{q} \oplus \mathcal{V}_{q}$.
As a result, $\mathcal{H}$ defines a principal connection on $\pi\colon Q \to Q/\mathsf{G}$; see, e.g., \citet{Mo1993a}.

The control system is then defined by the fully actuated subsystem~\eqref{eq:shape_system} in the shape space $S$ coupled with the rest of the system~\eqref{eq:nonholonomic_constraints}---defined by the nonholonomic constraints---in the direction of the fiber $\mathsf{G} = \SO(3) \times \R^{2}$.
A more geometric way of looking at it is the following:
For any given $q = (\varphi,g) \in Q$, we define the horizontal lift $\hl_{q}\colon T_{\varphi}S \to \mathcal{H}_{q}$ as $\hl_{q} \defeq (T_{q}\pi|_{\mathcal{H}_{q}})^{-1}$ or more concretely,
\begin{equation*}
  \hl_{q}(\dot{\varphi}_{1}, \dot{\varphi}_{2}) = (\dot{\varphi}_{1},\dot{\varphi}_{2},\dot{R},\dot{\mathbf{x}})
  \quad\text{with}\quad
  \eqref{eq:R-omega} \text{ and } \eqref{eq:nonholonomic_constraints}.
\end{equation*}
Then the kinematic control system is defined by the horizontal lift of the controlled subsystem~\eqref{eq:shape_system}:
\begin{equation*}
  \dot{q} = \hl_{q}(u_{1}, u_{2}).
\end{equation*}
This is an example of the nonholonomic (kinematic) control system considered by, e.g., \citet{Mo1993a} and \citet{KeMu1995}.

\subsection{Principal Connection Form for Kinematic Control System}
Another way of looking at the above principal connection that is more convenient for our purpose is the following:
We may define a principal connection form $\mathcal{A}\colon TQ \to \mathfrak{g}$ ($\mathfrak{g}$-valued one-form on $Q$), where $\mathfrak{g} = \so(3) \times \R^{2}$ is the Lie algebra of $\mathsf{G} = \SO(3) \times \R^{2}$, as 
\begin{equation*}
  \mathcal{A}_{q} = \mathcal{A}^{\so(3)}_{q} \oplus \mathcal{A}^{\R^{2}}_{q}
\end{equation*}
so that (i)~the distribution $\mathcal{H} \subset TQ$ can be written as $\mathcal{H}_{q} = \ker\mathcal{A}_{q}$; (ii)~it is $\mathsf{G}$-equivariant, i.e., for any $g \in \mathsf{G}$ and $v_{q} \in T_{q}Q$, we have $\mathcal{A}_{q}( T_{q}\Phi_{g}(v_{q}) ) = \Ad_{g^{-1}} \mathcal{A}_{q}(v_{q})$; (iii)~$\mathcal{A}_{q}(\xi_{Q}(q)) = \xi$ for any $\xi \in \mathfrak{g}$, where $\xi_{Q}$ is the infinitesimal generator defined by
\begin{equation}
  \label{eq:inf_gen}
  \xi_{Q}(q) \defeq \left.\od{}{\eps} \Phi_{\exp(\eps\xi)}(q) \right|_{\eps=0}.
\end{equation}
In coordinates, one may write such a connection one-form as (see, e.g., \citet[Proposition~2.9.12 on p.~120]{Bl2015})
\begin{equation}
  \label{eq:A}
  \mathcal{A}_{(\varphi,g)} = \Ad_{g^{-1}}\parentheses{ \d{g}\cdot g^{-1} + A_{i}(\varphi)\,\d\varphi_{i} }.
\end{equation}
More concretely, we may define $\mathcal{A}^{\so(3)}\colon TQ \to \so(3)$ and $\mathcal{A}^{\R^{2}}\colon TQ \to \R^{2}$ as follows:
\begin{align*}
  \mathcal{A}^{\so(3)}_{q}
  \defeq
    \Ad_{R^{-1}}\parentheses{
    \d{R} \cdot R^{-1} + A^{\so(3)}_{i}(\varphi)\,\d\varphi_{i}
  },
  \qquad
  \mathcal{A}^{\R^{2}}_{q}
  \defeq
  \begin{bmatrix}
    \d{x}_{1} \\
    \d{x}_{2}
  \end{bmatrix}
  + \mathbf{A}^{\R^{2}}_{i}(\varphi)\,\d{\varphi}_{i},
\end{align*}
where we used the Einstein summation convention; $\d{R} \cdot R^{-1}$ is seen as an $\so(3)$-valued one-form, i.e., $\d{R} \cdot R^{-1}(\dot{R}) = \dot{R}R^{-1} \in \so(3)$, and
\begin{gather}
  \label{eq:A-so3}
  A^{\so(3)}_{1}(\varphi)
  \defeq 
  \widehat{
    \begin{bmatrix}
      \frac{\rho}{2h}\cos(c(\varphi_{1} - \varphi_{2})) \\
      \frac{\rho}{2h}\sin(c(\varphi_{1} - \varphi_{2})) \\
      {c J}/{I_{\rm s}}
    \end{bmatrix}
  },
  \qquad
  A^{\so(3)}_{2}(\varphi)
  \defeq 
  \widehat{
    \begin{bmatrix}
      \frac{\rho}{2h}\cos(c(\varphi_{1} - \varphi_{2})) \\
      \frac{\rho}{2h}\sin(c(\varphi_{1} - \varphi_{2})) \\
      -{c J}/{I_{\rm s}}
    \end{bmatrix}
  },
  \\
  \label{eq:A-R2}
  \mathbf{A}^{\R^{2}}_{i}(\varphi)
  \defeq \frac{r\rho}{2h}
  \begin{bmatrix}
    \sin(c(\varphi_{1} - \varphi_{2})) \\
    -\cos(c(\varphi_{1} - \varphi_{2}))
  \end{bmatrix}
  \quad
  \text{for}
  \quad
  i = 1, 2, 
\end{gather}
where we used the hat map~\eqref{eq:hat_map}.

As a result, we have $\dot{q} \in \mathcal{H}_{q}$ if and only if $\mathcal{A}_{q}(\dot{q}) = 0$, and the subsystem~\eqref{eq:nonholonomic_constraints} in the direction of the fiber $\SO(3) \times \R^{2}$ can be written as
\begin{equation}
  \label{eq:nonholonomic_constraints-A}
  \hat{\bomega} = \dot{R}R^{-1} = -A^{\so(3)}_{i}(\varphi) \dot{\varphi}_{i},
  \qquad
  \dot{\mathbf{x}} = -\mathbf{A}^{\R^{2}}_{i}(\varphi)\dot{\varphi}_{i}.
\end{equation}
Note that the horizontal lift $\hl_{q}\colon T_{\varphi}S \to \mathcal{H}_{q}$ is then written as
\begin{equation*}
  \hl_{q}(\dot{\varphi}_{1}, \dot{\varphi}_{2})
  = \parentheses{
    \dot{\varphi}_{1}, \dot{\varphi}_{2},
    (-A^{\so(3)}_{i}(\varphi) \dot{\varphi}_{i})R,
    -\mathbf{A}^{\R^{2}}_{i}(\varphi)\dot{\varphi}_{i}
  }.
\end{equation*}

\section{Fiber Controllability}
\label{sec:fiber_controllability}
One of the main questions regarding the kinematic control system~\eqref{eq:kinematic_control_system} is its controllability.
The controllability of the subsystem \eqref{eq:shape_system} in the shape space $S$ is fairly trivial and is of not much practical importance.
What is more important practically is the \textit{fiber controllability}~\cite{KeMu1995}, i.e., the controllability in the direction of the fiber $\mathsf{G} = \SO(3) \times \R^{2}$.
The fiber controllability here addresses the question of whether it is possible to maneuver the robot to an arbitrary (center) position with an arbitrary rotational orientation, regardless of the configurations of the wheels.
As stated in Proposition~4 in \citet{KeMu1995} (see also \citet{Mo1991a,Mo1993a}), the Ambrose--Singer Theorem~\cite{AmSi1953} provides a criterion for fiber controllability in terms of the principal connection $\mathcal{A}$ defined above as well as its curvature.

\subsection{Curvature of Principal Connection}
The nonholonomic/nonintegrable nature of the horizontal distribution $\mathcal{H}$ is essential in the kinematic control system~\eqref{eq:kinematic_control_system}, as the Lie brackets of vector fields in $\mathcal{H}$ then generate directions of motion outside the distribution $\mathcal{H}$.
The lack of integrability is measured by the curvature $\mathcal{B}$ of the principal connection $\mathcal{A}$; it is the $\mathfrak{g}$-valued two-form on $Q$ defined as
\begin{equation*}
  \mathcal{B}(X,Y) \defeq \d\mathcal{A}(\hor X, \hor Y) = -\mathcal{A}([\hor X, \hor Y]),
\end{equation*}
where $X, Y \in T_{q}Q$ and $\hor X, \hor Y \in \mathcal{H}_{q}$ are their horizontal components, i.e.,
\begin{equation*}
  \hor X \defeq X - \parentheses{ \mathcal{A}(X) }_{Q}(q),
\end{equation*}
where $(\,\cdot\,)_{Q}$ stands for the infinitesimal generator defined in \eqref{eq:inf_gen}.
A more convenient formula for $\mathcal{B}$ is given by the Cartan structure equation (see, e.g., \citet[Theorem~2.1.9]{MaMiOrPeRa2007}):
\begin{equation*}
  \mathcal{B}(X,Y) = \d\mathcal{A}(X,Y) + [\mathcal{A}(X), \mathcal{A}(Y)],
\end{equation*}
where we have the plus sign on the right-hand side because $\Phi$, defined in \eqref{eq:Phi}, is a right action.
This formula gives the following coordinate expression for the curvature:
\begin{equation*}
  \mathcal{B}_{q} = \Ad_{g^{-1}}(B(\varphi)\,\d\varphi_{1} \wedge \d\varphi_{2}),
\end{equation*}
where the local expression $B\colon S \to \mathfrak{g}$ of the curvature is written in terms of the local expression $\{A_{i}\colon S \to \mathfrak{g}\}_{i=1,2}$ of the connection one-form \eqref{eq:A} as follows:
Let $\{ e_{a} \}_{a=1}^{\dim\mathsf{G}}$ be a basis for $\mathfrak{g}$, and $A_{i}(\varphi) = A_{i}^{a}(\varphi)\, e_{a}$ for $i = 1, 2$.
Then $B(\varphi) = B^{a}(\varphi)\, e_{a}$ with
\begin{equation*}
  B^{a} = \pd{A_{2}^{a}}{\varphi_{1}} - \pd{A_{1}^{a}}{\varphi_{2}} + C^{a}_{bc} A_{1}^{b} A_{2}^{c},
\end{equation*}
and $C^{a}_{bc}$ is the structure constant of $\mathfrak{g}$ defined as
\begin{equation*}
  [e_{b}, e_{c}] = C^{a}_{bc}e_{a}.
\end{equation*}
More explicitly, we have
\begin{equation*}
  \mathcal{B}_{q} = \mathcal{B}^{\so(3)}_{q} \oplus \mathcal{B}^{\R^{2}}_{q},
\end{equation*}
where
\begin{equation*}
  \mathcal{B}^{\so(3)}_{q} = \Ad_{R^{-1}}(B^{\so(3)}\,\d\varphi_{1} \wedge \d\varphi_{2}),
  \qquad
  \mathcal{B}^{\R^{2}}_{q} = \mathbf{B}^{\R^{2}}\,\d\varphi_{1} \wedge \d\varphi_{2}
\end{equation*}
are the curvatures of the principal connections $\mathcal{A}^{\so(3)}$ and $\mathcal{A}^{\R^{2}}$.
The expressions are then given by
\begin{align}
  B^{\so(3)}
  &= \parentheses{ \pd{(A_{2}^{\so(3)})^{a}}{\varphi_{1}} - \pd{(A_{1}^{\so(3)})^{a}}{\varphi_{2}} + C^{a}_{bc} (A_{1}^{\so(3)})^{b} (A_{2}^{\so(3)})^{c} } \hat{\mathbf{e}}_{a} \nonumber\\
  &= \parentheses{ \pd{\mathbf{A}_{2}^{\so(3)}}{\varphi_{1}} - \pd{\mathbf{A}_{1}^{\so(3)}}{\varphi_{2}} + \mathbf{A}_{1}^{\so(3)} \times \mathbf{A}_{2}^{\so(3)} }^{\widehat{}} \nonumber\\
  &= \frac{\rho^{2}}{2 h w}
  \widehat{
    \begin{bmatrix}
      -\sin(c(\varphi_{1} - \varphi_{2})) \\
      \cos(c(\varphi_{1} - \varphi_{2})) \\
      0
    \end{bmatrix}
  },
  \label{eq:B-so3}
\end{align}
where $\{ \mathbf{e}_{a} \}_{a=1}^{3}$ is the standard basis for $\R^{3}$ and $\mathbf{A}^{\so(3)}_{i}\colon S \to \R^{3}$ is defined so that $\widehat{\mathbf{A}^{\so(3)}_{i}} = A^{\so(3)}_{i}$ for $i = 1, 2$ under the hat map~\eqref{eq:hat_map}, whereas
\begin{equation}
  \label{eq:B-R2}
  \mathbf{B}^{\R^{2}} =
  \pd{\mathbf{A}_{2}^{\R^{2}}}{\varphi_{1}} - \pd{\mathbf{A}_{1}^{\R^{2}}}{\varphi_{2}}
  = \frac{c\,r\rho}{h}
  \begin{bmatrix}
    \cos(c(\varphi_{1} - \varphi_{2})) \\
    \sin(c(\varphi_{1} - \varphi_{2}))
  \end{bmatrix}
\end{equation}
because $\R^{2}$ is abelian.

\subsection{Fiber Controllability}
We are now ready to prove the fiber controllability of the robot:
\begin{theorem}
  \label{thm:fiber_controllability}
  The kinematic control system~\eqref{eq:kinematic_control_system} of the spherical rolling robot is fiber controllable, i.e., given arbitrary two points $g_{0}, g_{1} \in \mathsf{G} = \SO(3) \times \R^{2}$, there exists a control $u\colon[t_{0},t_{1}] \to \R^{2}$ such that the solution $g(t) = (R(t),\mathbf{x}(t))$ of the system~\eqref{eq:kinematic_control_system} under the initial condition $g(t_{0}) = g_{0}$ satisfies $g(t_{1}) = g_{1}$.
\end{theorem}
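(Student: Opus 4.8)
The plan is to invoke the holonomy criterion for fiber controllability established in Proposition~4 of \citet{KeMu1995}, which itself rests on the Ambrose--Singer Theorem~\cite{AmSi1953}: the system is fiber controllable precisely when the holonomy group of the principal connection $\mathcal{A}$ coincides with the structure group $\mathsf{G} = \SO(3) \times \R^{2}$. Since $\mathsf{G}$ is connected, it suffices to show that the holonomy Lie algebra---which, by Ambrose--Singer, is spanned by the values of the curvature $\mathcal{B}$ transported to a fixed base point, together with their $\Ad$-translates---is all of $\mathfrak{g} = \so(3) \oplus \R^{2}$. Because the shape subsystem~\eqref{eq:shape_system} is fully actuated, every shape $\varphi \in S$ is reachable, so the curvature values $B^{\so(3)}(\varphi)$ and $\mathbf{B}^{\R^{2}}(\varphi)$ computed in \eqref{eq:B-so3} and \eqref{eq:B-R2} are available as generators for all $\varphi$.

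First I would treat the two summands of $\mathfrak{g}$ separately. For the $\R^{2}$-part, the curvature $\mathbf{B}^{\R^{2}}(\varphi)$ in \eqref{eq:B-R2} has constant nonzero magnitude $c\,r\rho/h$ and sweeps out a full circle as the shape difference $\varphi_{1} - \varphi_{2}$ varies; hence two suitably chosen shapes already yield linearly independent vectors spanning all of $\R^{2}$. For the $\so(3)$-part, $B^{\so(3)}(\varphi)$ in \eqref{eq:B-so3} ranges over the two-dimensional subspace $\Span\{\hat{\mathbf{e}}_{1}, \hat{\mathbf{e}}_{2}\} \subset \so(3)$ as the shape varies, since its $\R^{3}$-preimage $\frac{\rho^{2}}{2hw}(-\sin(c(\varphi_{1}-\varphi_{2})), \cos(c(\varphi_{1}-\varphi_{2})), 0)$ traces a circle in the $\mathbf{e}_{1}$-$\mathbf{e}_{2}$ plane.

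The crucial step is that this two-dimensional set of $\so(3)$-generators already generates all of $\so(3)$ as a Lie algebra: because the holonomy algebra is closed under the bracket, from $\hat{\mathbf{e}}_{1}$ and $\hat{\mathbf{e}}_{2}$ we obtain $[\hat{\mathbf{e}}_{1}, \hat{\mathbf{e}}_{2}] = \widehat{\mathbf{e}_{1} \times \mathbf{e}_{2}} = \hat{\mathbf{e}}_{3}$, so the missing third direction appears automatically. Since $\mathfrak{g} = \so(3) \oplus \R^{2}$ is a direct sum of ideals, with the $\R^{2}$ factor central, combining the two computations shows that the holonomy algebra contains $\so(3) \oplus \R^{2} = \mathfrak{g}$.

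Finally, because the holonomy algebra is all of $\mathfrak{g}$ and $\mathsf{G}$ is connected, the restricted holonomy group---and hence the full holonomy group---is all of $\mathsf{G}$, so the criterion yields fiber controllability. I expect the only delicate point to be the bookkeeping in the Ambrose--Singer construction, namely confirming that it suffices to use curvature values at varying shapes (legitimate here because $S$ is connected and the shape subsystem is controllable) rather than the entire reachable set of $\Ad$-orbits; the purely algebraic generation of $\so(3)$ from a generic two-plane is then immediate, and the nonvanishing of the physical constants $\rho, h, w, r, c$ guarantees the spanning is nondegenerate.
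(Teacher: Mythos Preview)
Your argument is correct, but it follows a genuinely different route from the paper's proof. You work purely with the curvature: you let the shape $\varphi$ vary so that $B^{\so(3)}(\varphi)$ sweeps out the $\hat{\mathbf{e}}_{1}$--$\hat{\mathbf{e}}_{2}$ plane and $\mathbf{B}^{\R^{2}}(\varphi)$ sweeps out all of $\R^{2}$, and then you close up under the Lie bracket to pick up $\hat{\mathbf{e}}_{3}$. The paper instead works at a \emph{single fixed} shape $\varphi$ and uses the connection coefficients $A_{1}(\varphi), A_{2}(\varphi)$ together with the curvature $B(\varphi)$: the two $A^{\so(3)}_{i}(\varphi)$ already span the plane containing $(\cos(c\phi_{2}),\sin(c\phi_{2}),0)$ and $\mathbf{e}_{3}$, and $B^{\so(3)}(\varphi)$ supplies the orthogonal horizontal direction, so $\Span\{A^{\so(3)}_{1},A^{\so(3)}_{2},B^{\so(3)}\}=\so(3)$ without any bracketing; likewise $\mathbf{A}^{\R^{2}}(\varphi)$ and $\mathbf{B}^{\R^{2}}(\varphi)$ are orthogonal and span $\R^{2}$. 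This pointwise spanning gives \emph{local} fiber controllability at every $(\varphi_{0},g_{0})$ via Proposition~4 of \citet{KeMu1995}, and the paper then upgrades to global controllability by a standard compactness argument on a path in the connected group $\mathsf{G}$. The trade-off: the paper's approach sidesteps exactly the bookkeeping you flag (no need to track $\Ad$-transport of curvature values along horizontal lifts, and no bracket computation), at the cost of the extra local-to-global step; your approach is closer in spirit to the raw Ambrose--Singer statement and reaches the global conclusion in one stroke, but the passage from ``curvature values at varying $\varphi$ span a $2$-plane'' to ``two independent elements lie in the holonomy algebra at the basepoint'' does require the observation that $\so(3)$ has no two-dimensional subalgebras (so any two independent $\Ad$-transported curvature values already generate everything).
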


\begin{proof}
  Let us define, for each $\varphi \in S$, the subspaces $\{ \mathfrak{h}_{i}(\varphi) \}_{i=1}^{2}$ of $\mathfrak{g}$ as follows:
  \begin{equation*}
    \mathfrak{h}_{1}(\varphi) \defeq \Span\{ A(\varphi) \},
    \qquad
    \mathfrak{h}_{2}(\varphi) \defeq \Span\{ B(\varphi) \},
  \end{equation*}
  where $A = A^{\so(3)} \oplus \mathbf{A}^{\R^{2}}$ and $B = B^{\so(3)} \oplus \mathbf{B}^{\R^{2}}$.
  Since $\mathsf{G} = \SO(3) \times \R^{2}$ is a direct product, $\mathfrak{g} = \so(3) \oplus \R^{2}$ is a direct sum.
  Hence we may treat $\so(3)$ and $\R^{2}$ separately.
  First, it is clear from \eqref{eq:A-so3} and \eqref{eq:B-so3} that
  \begin{equation*}
    \Span\{ A^{\so(3)}(\varphi) \} + \Span\{ B^{\so(3)}(\varphi) \} = \so(3),
  \end{equation*}
  and also from \eqref{eq:A-R2} and \eqref{eq:B-R2} that
  \begin{equation*}
    \Span\{ \mathbf{A}^{\R^{2}}(\varphi) \} + \Span\{ \mathbf{B}^{\R^{2}}(\varphi) \} = \R^{2}
  \end{equation*}
  for any $\varphi \in S$.
  This implies that $\mathfrak{h}_{1}(\varphi) \oplus \mathfrak{h}_{2}(\varphi) = \mathfrak{g}$ for any $\varphi \in S$.
  Hence by Proposition~4 from \citet{KeMu1995}, the kinematic control system~\eqref{eq:kinematic_control_system} is locally fiber controllable near any $(\varphi_{0}, g_{0}) \in Q$, i.e., there exists an open neighborhood of $g_{0} \in \mathsf{G}$ that can be reached in the fiber direction.
  
  However, since $\mathsf{G}$ is connected, this implies that, for any $g \in \mathsf{G}$, $(\varphi_{0}, g)$ can be reached from $(\varphi_{0}, g_{0})$.
  In fact, the connectedness implies that, for any $g_{0},g_{1} \in \mathsf{G}$, one can find a path (not the trajectory of the system in general) $g\colon [0,1] \to \mathsf{G}$ such that $g(0) = g_{0}$ and $g(1) = g_{1}$.
  Now the local fiber controllability implies that, for any $t \in [0,1]$, $g(t) \in \mathsf{G}$ has an open neighborhood $U_{t} \subset \mathsf{G}$ that can be reached from $g(t)$; also $g(t)$ can be reached from any point in $U_{t}$ by reversing the control because there is no drift in the system~\eqref{eq:kinematic_control_system}.
  This defines an open cover $\{ U_{t} \}_{t \in [0,1]}$ of the path $g([0,1])$.
  But then, since the path is compact, there exists a finite subcovering, and hence the path is covered by finite open neighborhoods, each of which can be reached from a certain point on the path; also the point on the path can be reached from any point in the neighborhood.
  This proves the existence of a desired control $u$.
\end{proof}

\section{Geometric Phase: Curvature and Locomotion}
\label{sec:curvature_and_locomotion}
The fiber controllability proved above only concerned with existence of a desired control, and does not provide us with a constructive way of finding a desired control.
In this section, we partially address this problem by finding explicit formulas for the changes in the translational position and rotational orientation of the sphere---called \textit{holonomy} or \textit{geometric phase}---under those control laws that result in certain types of loops in the shape space $S$.
These formulas apply to only some special types of control and can generate only certain types of motions, and does not give the control law for \textit{any} maneuver in the fiber.
Nevertheless, these results illustrate how the geometric ingredients introduced above play a role in motion generation, and have potential applications in motion planning; see, e.g., \citet{KeMu1995} and \citet{HaCh2011}.

\subsection{Translational Holonomy and Area Rule}
It is straightforward to calculate the translational motion of the center of the sphere of the robot by integrating the $\R^{2}$ part of \eqref{eq:nonholonomic_constraints}.
A particularly interesting case is where the control $(u_{1},u_{2})$ is applied so that $\varphi = (\varphi_{1},\varphi_{2})$ makes a loop in the shape space torus $S = \mathbb{S}^{1} \times \mathbb{S}^{1}$ or its covering space $\R \times \R$ (if a wheel makes more than one revolution).
In this case, the displacement of the center of the sphere is determined by the weighted area enclosed by the loop determined in terms of the curvature \eqref{eq:B-R2}---an example of the ``area rule'' (see, e.g., \citet{KeMu1995}).

Let $\Gamma\colon [0,T] \to S$ be a loop in the shape space $S$ that encloses a domain $D \subset S$, i.e., $\partial D = \Gamma([0,T])$.
Using Stokes's Theorem, one can find the displacement of the center of the sphere in terms of the curvature as follows:
\begin{align}
  \mathbf{x}(T) - \mathbf{x}(0)
  &= -\int_{\partial D} \mathbf{A}^{\R^{2}}_{i}(\varphi)\d\varphi_{i} \nonumber\\
  &= -\int_{D} \mathbf{B}^{\R^{2}}(\varphi) \d\varphi_{1} \wedge \d\varphi_{2} \nonumber\\
  &= -\int_{D}
    \begin{bmatrix}
      \cos(c(\varphi_{1} - \varphi_{2})) \\
      \sin(c(\varphi_{1} - \varphi_{2}))
    \end{bmatrix}
  \d\varphi_{1} \wedge \d\varphi_{2}.
  \label{eq:area_rule}
\end{align}

\begin{example}
  As a simple and typical example to see the above area rule, consider a rectangular loop $\Gamma$ in the shape space $S = \mathbb{S}^{1} \times \mathbb{S}^{1}$ or its covering space $\R \times \R$ shown in Fig.~\ref{fig:loop-trans_holonomy}.
  \begin{figure}[htbp]
    \centering
    \subfigure[Loop $\Gamma$ in the shape space $S = \mathbb{S}^{1} \times \mathbb{S}^{1}$ or its covering space $\R \times \R$.]{
      \begin{tikzpicture}[smooth, scale=2, samples=50]
        \clip (-0.35,-0.35) rectangle (3.25,2.25);
        \draw[-] (-0.35,0) -- (3,0) node[below] {$\varphi_{1}$};
        \draw[-] (0,-0.35) -- (0,2) node[left] {$\varphi_{2}$};
        \draw (2.25,0) -- (2.25,-0.1) node[below] {$\alpha$};
        \draw (0,1.5) -- (-0.1,1.5) node[left] {$\beta$};
        \fill[green!40!black!40,opacity=0.7] (0,0) -- (2.25,0) -- (2.25,1.5) -- (0,1.5) -- cycle;
        \draw[-stealth, ultra thick, green!55!black] (0,0) -- (1.125,0);
        \draw[ultra thick, green!55!black] (1.1,0)-- (2.25,0);
        \draw[-stealth, ultra thick, green!55!black] (2.25,0) -- (2.25,0.75);
        \draw[ultra thick, green!55!black] (2.25,0.7) -- (2.25,1.5);
        \draw[-stealth, ultra thick, green!55!black] (2.25,1.5) -- (1.125,1.5);
        \draw[ultra thick, green!55!black] (1.15,1.5) -- (0,1.5);
        \draw[-stealth, ultra thick, green!55!black] (0,1.5) -- (0,0.75);
        \draw[ultra thick, green!55!black] (0,0.8) -- (0,0);
        \node[right, green!40!black] at (2.25,1) {$\Gamma = \partial D$};
        \node[green!40!black] at (1.125,0.75) {$D$};
      \end{tikzpicture}
      \label{fig:loop-trans_holonomy}
    }
    \quad
    \subfigure[Trajectory of the center of the sphere.]{
      \includegraphics[width=.35\linewidth]{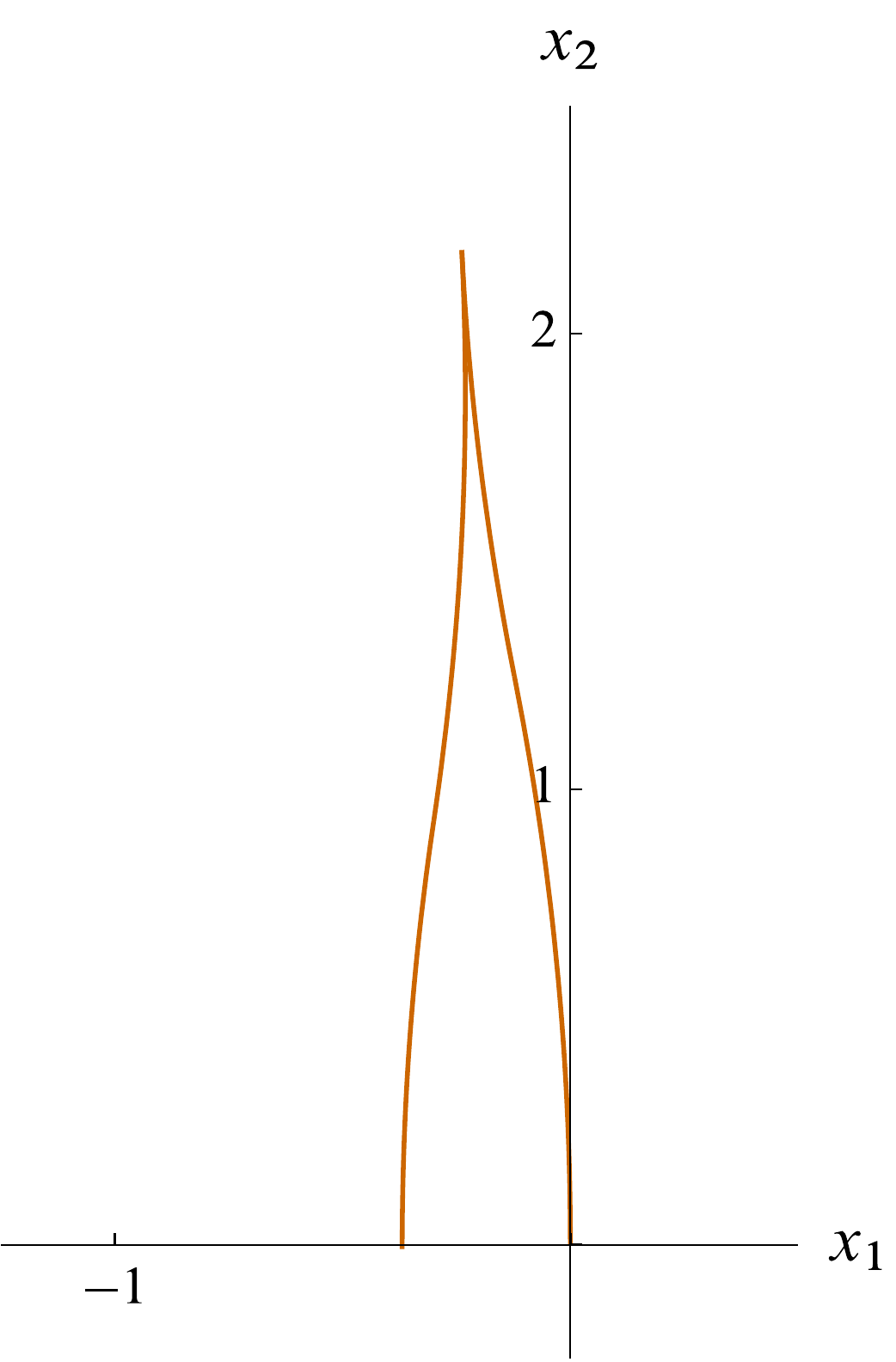}
      \label{fig:x-trans_holonomy}
    }
    \captionsetup{width=0.9\textwidth}
    \caption{Example of translational holonomy by the area rule.
      In (b), the parameters are: $r = 1$, $\rho = 0.3$, $h = 0.75$, $w = 0.8$, and $J/I_{\rm s} = 5$; the initial position is $\mathbf{x}(0) = \mathbf{0}$; the angles are $\alpha = 7\pi$ and $\beta = 6\pi$; the terminal time is $T = 2(\alpha + \beta)$.
    }
    \label{fig:trans_holonomy}
  \end{figure}
  It is straightforward to evaluate the integral over the domain $D$:
  \begin{align*}
    \mathbf{x}(T) - \mathbf{x}(0)
    &= -\int_{D} \mathbf{B}^{\R^{2}}(\varphi) \d\varphi_{1} \wedge \d\varphi_{2} \\
    &= \frac{r \rho}{c\,h}
      \begin{bmatrix}
        \cos(c\alpha) + \cos(c\beta) - \cos\parentheses{ c(\alpha - \beta) } - 1 \\
        \sin(c\alpha) - \sin(c\beta) - \sin\parentheses{ c(\alpha - \beta) } - 1
      \end{bmatrix}.
  \end{align*}
  With the parameters as specified in the caption of Fig.~\ref{fig:trans_holonomy}, the above area rule gives $\mathbf{x}(T) - \mathbf{x}(0) \simeq (-0.37, -0.01)$; this is the actual displacement of the center $\mathbf{x}$ in time $T$ as shown in Fig.~\ref{fig:x-trans_holonomy}.
\end{example}

\subsection{Rotational Holonomy}
How much does the sphere rotate as $\varphi$ makes a loop in the shape space?
Unfortunately, calculation of geometric phases in rotations is not as simple and clear cut as the translational case because of the non-abelian nature of $\SO(3)$.
As we have seen in \eqref{eq:nonholonomic_constraints-A}, the time evolution of the rotational configuration $R \in \SO(3)$ is related to the evolution of the angles $\varphi$ of the wheels as follows:
\begin{equation}
  \label{eq:reconstruction_equation-rot}
 \dot{R} = \parentheses{ -A^{\so(3)}_{i}(\varphi) \dot{\varphi}_{i} } R.
\end{equation}
Suppose that a curve $\varphi\colon [0,T] \to S$ is given.
Then, as is well known in basic theory of linear differential equations, one may formally write down the solution of the above system as an infinite series of integrals.
However, since $\SO(3)$ is non-abelian, this series does not simplify to a matrix exponential in general.
Therefore there is no simple area rule like the (abelian) translational case.

Here we restrict our attention to a particular type of control for which \eqref{eq:reconstruction_equation-rot} is explicitly solvable.
Upon the change of variables to the new coordinates $(\phi_{1}, \phi_{2})$ defined by
\begin{equation}
  \label{eq:phi}
  \phi_{1} \defeq \varphi_{1} + \varphi_{2},
  \qquad
  \phi_{2} \defeq \varphi_{1} - \varphi_{2},
\end{equation}
the connection form $\mathcal{A}^{\so(3)}$ becomes
\begin{align*}
  \mathcal{A}^{\so(3)}_{q}
  \defeq
    \Ad_{R^{-1}}\parentheses{
    \d{R} \cdot R^{-1} + \tilde{A}^{\so(3)}_{i}(\phi)\,\d\phi_{i}
  },
\end{align*}
where
\begin{gather*}
  \tilde{A}^{\so(3)}_{1}(\phi)
  \defeq
  \frac{\rho}{2h}
  \widehat{
    \begin{bmatrix}
      \cos(c\,\phi_{2}) \\
      \sin(c\,\phi_{2}) \\
      0
    \end{bmatrix}
  },
  \qquad
  \tilde{A}^{\so(3)}_{2}(\phi)
  \defeq
  \frac{c J}{I_{\rm s}}
  \widehat{
    \begin{bmatrix}
      0 \\
      0 \\
      1
    \end{bmatrix}
  },
\end{gather*}
Note that the first one is constant if $\phi_{2}$ is constant, while the second one is always constant.
This suggests us to make a loop in the shape space so that each edge is parallel to either the $\phi_{1}$- or $\phi_{2}$-axis; a typical loop of this type is shown in Fig.~\ref{fig:loop-rot_holonomy} in the $\varphi_{1}$-$\varphi_{2}$ plane.
Particularly, if we require that the angular velocities are piecewise constant, the loop is given by
\begin{equation*}
  \varphi_{1}(t) =
  \begin{cases}
    t/2 & 0 \le t < \alpha + \beta, \\
    \alpha + \beta - t/2 & \alpha + \beta \le t \le 2(\alpha + \beta),
  \end{cases}
  \quad
  \varphi_{2}(t) =
  \begin{cases}
    t/2 & 0 \le t < \alpha, \\
    \alpha - t/2 & \alpha \le t < 2\alpha + \beta, \\
    t/2 - (\alpha + \beta) & 2\alpha + \beta \le t \le 2(\alpha + \beta),
  \end{cases}
\end{equation*}
under the piecewise constant control
\begin{equation}
  \label{eq:u-rot_holonomy}
  u(t) = (u_{1}(t),u_{2}(t)) =
  \begin{cases}
    (1/2, 1/2) & 0 \le t < \alpha, \\
    (1/2,-1/2) & \alpha \le t < \alpha + \beta, \\
    (-1/2,-1/2) & \alpha + \beta \le t < 2\alpha + \beta, \\
    (-1/2,1/2) & 2\alpha + \beta \le t \le 2(\alpha + \beta).
  \end{cases}
\end{equation}

\begin{figure}
  \centering
  \subfigure[Loop in the shape space for rotational holonomy.]{
    \begin{tikzpicture}[smooth, scale=2, samples=50]
      \clip (-0.35,-1.5) rectangle (3.5,1.75);
      \draw[-] (-0.35,0) -- (3,0) node[below] {$\varphi_{1}$};
      \draw[-] (0,-1.25) -- (0,1.5) node[left] {$\varphi_{2}$};
      \fill[green!40!black!40,opacity=0.7] (0,0) -- (1.25,1.25) -- (2,0.5) -- (0.75,-0.75) -- cycle;
      \draw[-stealth, ultra thick, green!55!black] (0,0) -- (0.625,0.625);
      \draw[ultra thick, green!55!black] (0.6,0.6) -- (1.25,1.25) node[above, green!40!black] {$\left(\frac{\alpha}{2},\frac{\alpha}{2}\right)$};
      \draw[-stealth, ultra thick, green!55!black] (1.25,1.25) -- (1.625,0.875);
      \draw[ultra thick, green!55!black] (1.6,0.9) -- (2,0.5) node[right, green!40!black] {$\left(\frac{\alpha+\beta}{2},\frac{\alpha-\beta}{2}\right)$};
      \draw[-stealth, ultra thick, green!55!black] (2,0.5) -- (1.375,-0.125);
      \draw[ultra thick, green!55!black] (1.4,-0.1) -- (0.75,-0.75) node[below, green!40!black] {$\left(\frac{\beta}{2},\frac{\beta}{2}\right)$};
      \draw[-stealth, ultra thick, green!55!black] (0.75,-0.75) -- (0.375,-0.375);
      \draw[ultra thick, green!55!black] (0.4,-0.4) -- (0,0);
      \node[right, green!40!black] at (1.625,1) {$\Gamma$};
      \node[green!40!black] at (1,0.25) {$D$};
    \end{tikzpicture}
    \label{fig:loop-rot_holonomy}
  }
  \quad
  \subfigure[Trajectory of a point on the sphere.]{
    \includegraphics[width=.425\linewidth]{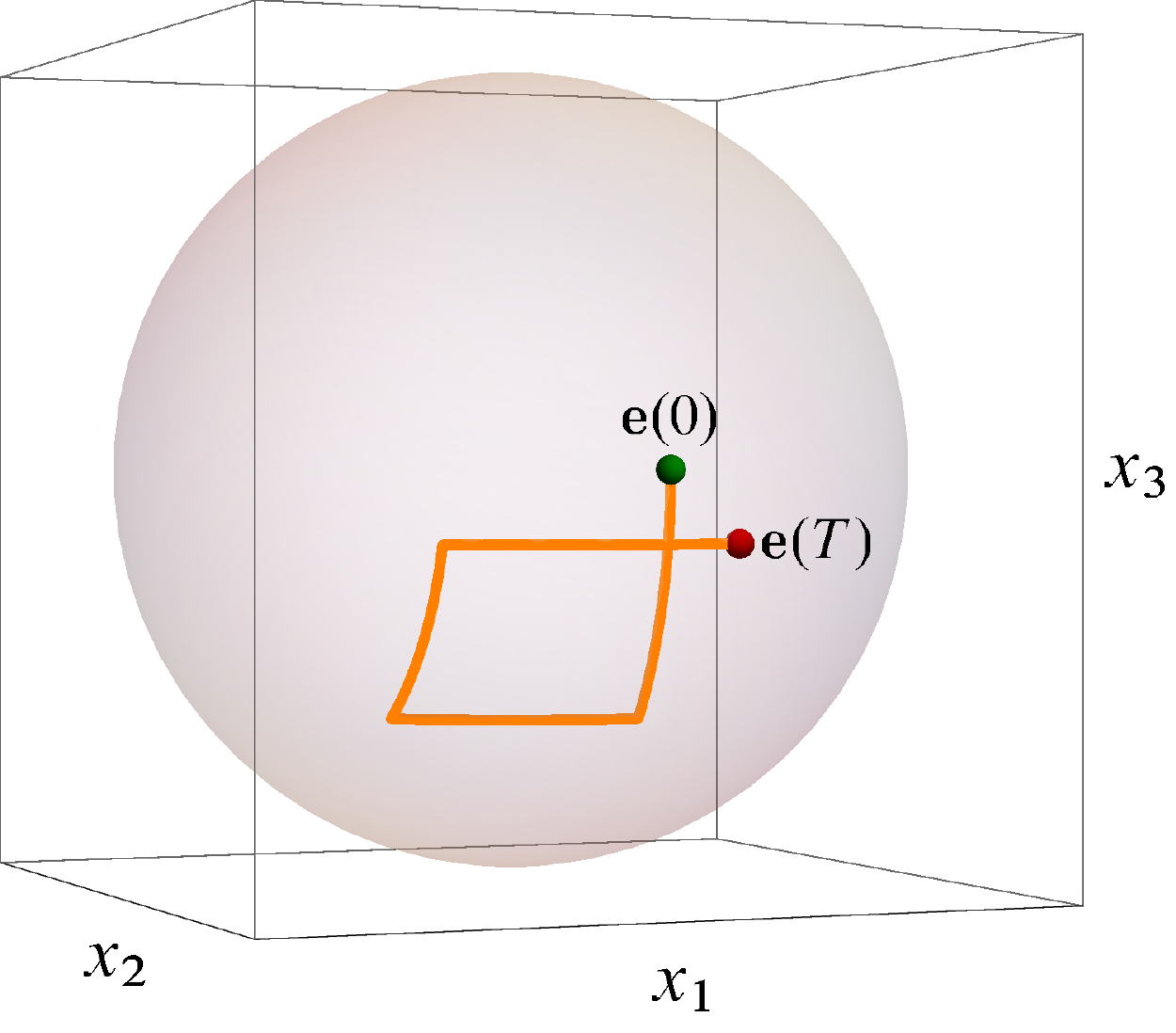}
    \label{fig:rot_holonomy}
  }
  \captionsetup{width=0.9\textwidth}
  \caption{(a)~A loop along which one can find an exact expression for the rotational holonomy.
    (b)~The initial position of the point is $\mathbf{e}(0) = (0,1,0)$.
    The parameters are the same as those from the caption of Fig.~\ref{fig:trans_holonomy} except that $\alpha = \pi$ and $\beta = 3\pi/2$ here.
  }
\end{figure}

Then we can compute the rotational holonomy explicitly because \eqref{eq:reconstruction_equation-rot} is exactly solvable along each edge of the rectangular loop:
\begin{align*}
  R(\alpha) &= \exp\parentheses{
              -\alpha\,\frac{\rho}{2h}
              \widehat{
              \begin{bmatrix}
                1 \\
                0 \\
                0
              \end{bmatrix}}
  } R(0), &
            R(\alpha+\beta) &= \exp\parentheses{
                              -\beta\,\frac{c J}{I_{\rm s}}
                              \widehat{
                              \begin{bmatrix}
                                0 \\
                                0 \\
                                1
                              \end{bmatrix}}
  } R(\alpha),
  \\
  R(2\alpha + \beta) &= \exp\parentheses{
                       \alpha\,\frac{\rho}{2h}
                       \widehat{
                       \begin{bmatrix}
                         \cos(c\beta) \\
                         \sin(c\beta) \\
                         0
                       \end{bmatrix}}
  } R(\alpha + \beta), &
                         R(2(\alpha+\beta)) &= \exp\parentheses{
                                              \beta\,\frac{c J}{I_{\rm s}}
                                              \widehat{
                                              \begin{bmatrix}
                                                0 \\
                                                0 \\
                                                1
                                              \end{bmatrix}}
  } R(2\alpha + \beta).
\end{align*}
Combining the above results, we obtain an explicit expression for the rotational holonomy $R(2(\alpha+\beta))$ (assuming $R(0) = I$ without of loss of generality) picked up after the loop in Fig.~\ref{fig:loop-rot_holonomy} is traversed.

Note that the resulting rotational holonomy is independent of a particular choice of control as long as the curve traverses the same loop.
In other words, one obtains the same holonomy $R(2(\alpha+\beta))$ along any curve $\varphi\colon [0,T] \to S$ that traverses the loop in the shape space as shown in Fig.~\ref{fig:loop-rot_holonomy}.

Fig.~\ref{fig:rot_holonomy} shows an example of the trajectory $\mathbf{e}(t)$ in space (modulo translations of the center of the sphere) for $0 \le t \le T = 2(\alpha + \beta)$ of a point fixed on the sphere under the control~\eqref{eq:u-rot_holonomy}, that is, $\mathbf{e}(t) = R(t)\,\mathbf{e}(0)$.

Particularly, if we pick $\beta = 2\pi/c$, then the translational holonomy vanishes because the curvature of the translational part of the connection in terms of $(\phi_{1},\phi_{2})$ is
\begin{equation*}
  \mathbf{B}^{\R^{2}}
  = -\frac{c\,r \rho}{2h}
  \begin{bmatrix}
    \cos(c\,\phi_{2}) \\
    \sin(c\,\phi_{2})
  \end{bmatrix}\,\d\phi_{1} \wedge \d\phi_{2},
\end{equation*}
and vanishes if integrated over $0\le \phi_{2} \le \beta = 2\pi/c$.
Hence $\mathbf{x}(T) = \mathbf{x}(0)$ by the area rule~\eqref{eq:area_rule}.
Therefore, with this particular choice of $\beta$, the center of the robot comes back to the original position but picks up the rotational phase calculated above, i.e., the total geometric phase is only rotational.

\section{Optimal Control between Two Center Positions}
\label{sec:optimal_control}
Let us now consider an optimal control problem of the robot.
We will restrict ourselves to a simple special case: The terminal time is fixed, and only the wheel angles and the translational configurations (the position of the center of the sphere) are specified at the end (initial and terminal) times, i.e., the rotational configurations at the end times are immaterial.
We show that the resulting optimal control system is completely integrable, and obtain an explicit solution for it using Jacobi's elliptic function.

\subsection{Sub-Riemannian Geodesic and Optimal Control Problem}
Consider the problem of maneuvering the robot from a given position of the center of the sphere to another (regardless of the rotational orientations) in the most ``efficient'' way.
Here we measure the efficiency in terms of the ``energy'' of a curve $q\colon [0,T] \to S \times \R^{2}$ defined by
\begin{equation*}
  E(q) \defeq \int_{0}^{T} \frac{1}{2} \norm{ \dot{\varphi} }^{2}\,dt
  = \int_{0}^{T} \frac{1}{2}\parentheses{ u_{1}(t)^{2} + u_{2}(t)^{2} }\,dt,
\end{equation*}
where $\norm{\,\cdot\,}$ is the standard Euclidean norm, i.e., $\norm{\dot{\varphi}(t)} \defeq \sqrt{ \dot{\varphi}_{1}(t)^{2} + \dot{\varphi}_{2}(t)^{2} }$.
The curves are subject to the condition that they are horizontal, i.e., satisfy the nonholonomic constraint~\eqref{eq:nonholonomic_constraints-A}, and join two given points $q_{0} \defeq (\varphi(0),\mathbf{x}(0))$ and $q_{T} \defeq (\varphi(T),\mathbf{x}(T))$ in $S \times \R^{2}$.
More specifically, we have the following optimal control problem:
\begin{equation}
  \label{eq:OCP1}
  \DS \min_{u} \int_{0}^{T} \frac{1}{2}(u_{1}(t)^{2} + u_{2}(t)^{2})\,dt \medskip\\
  \quad\text{subject to}\quad
  \left\{
    \begin{array}{l}
      \dot{\mathbf{x}} = -\mathbf{A}^{\R^{2}}_{i}(\varphi)\,u_{i}, \smallskip\\
      \dot{\varphi}_{1} = u_{1}, \quad \dot{\varphi}_{2} = u_{2}, \smallskip\\
      \text{$\mathbf{x}(0)$ and $\mathbf{x}(T)$ fixed}, \smallskip\\
      \text{$\varphi(0) = \varphi(T)$ fixed}.
    \end{array}
  \right.
\end{equation}

Those curves that minimize this particular form of energy are intimately related to the so-called sub-Riemannian geodesics:
Let us define the length of a curve $q\colon [0,T] \to S \times \R^{2}$ connecting $q_{0}$ and $q_{T}$ by
\begin{equation*}
  \ell(q) \defeq \int_{0}^{T} \norm{ \dot{\varphi} }\,dt
  = \int_{0}^{T} \sqrt{ u_{1}(t)^{2} + u_{2}(t)^{2} } \,dt.
\end{equation*}
Note that the metric used here is degenerate because the length is measured in terms of only $\dot{\varphi}$ in the derivative $\dot{q} = (\dot{\varphi},\dot{\mathbf{x}})$.
A curve that minimizes such a length is called a sub-Riemannian geodesic; see, e.g., \citet{Mo2002}.
In this particular setting, it is the shortest path in the shape space $S$ whose horizontal lift to $S \times \R^{2}$ joins $q_{0}$ and $q_{T}$.
What one can show (see, e.g., \cite[Proposition~1.6]{Mo2002}) is that $\varphi$ is a minimizer of the energy $E$ if and only if $\varphi$ is a sub-Riemannian geodesic with constant speed, i.e., $\norm{ \dot{\varphi}(t) } = \text{const}.$\footnote{Note that one can reparametrize a curve $\varphi\colon [0,\tilde{T}] \to S$ with $\norm{\dot{\varphi}(\tilde{t})} \neq \text{const}.$ by its arc length $t$ (or constant multiple of it) so that $\norm{\dot{\varphi}(t)} = \text{const}$.}.

Moreover, as mentioned in \citet{SaMo1992}, this type of optimal control problem is also related to the time-optimal control of the same system:
The normalized control $u(t)/\norm{u(t)}$ solves the time-optimal control problem to minimize the time $T$ subject to the same system as above as well as the constraint $\norm{u(t)} \le 1$ on the control inputs.

\subsection{Pontryagin Maximum Principle}
Let us write
\begin{equation*}
  q = (\varphi, \mathbf{x}) \in S \times \R^{2},
  \qquad
  p = (\gamma, \mathbf{p}) \in T_{(\varphi,\mathbf{x})}^{*}(S \times \R^{2}).
\end{equation*}
Then the control Hamiltonian is defined by
\begin{equation*}
  H_{\text{c}}(q, p, u)
  \defeq \mathbf{p} \cdot \parentheses{ -\mathbf{A}^{\R^{2}}_{i}(\varphi) u_{i} } + \gamma \cdot u - \frac{1}{2}(u_{1}^{2} + u_{2}^{2}).
\end{equation*}
Since it is quadratic in control $u$, it is easily maximized with respect to $u$ to yield the optimal control $u^{\star}(q,p) = \argmax_{u \in \R^{2}} H_{\text{c}}(q, p, u)$; specifically,
\begin{align*}
  u^{\star}_{i}(\varphi,\gamma)
  &= \gamma_{i} - \mathbf{p} \cdot \mathbf{A}^{\R^{2}}_{i}(\varphi) \\
  &= \gamma_{i} - \frac{r\rho}{2h} \brackets{ p_{1} \sin (c (\varphi_{1}-\varphi_{2})) - p_{2} \cos (c (\varphi_{1}-\varphi_{2})) }
\end{align*}
for $i = 1, 2$.
Hence we have the Hamiltonian
\begin{align}
  H(q,p)
  &\defeq \max_{u \in \R^{2}} H_{\text{c}}(q, p, u)
  \nonumber \\
  &= \frac{1}{2}\parentheses{ u^{\star}_{1}(\varphi,\gamma)^{2} + u^{\star}_{2}(\varphi,\gamma)^{2} }
  = \frac{1}{2}\parentheses{ \gamma_{i} - \mathbf{p} \cdot \mathbf{A}^{\R^{2}}_{i}(\varphi) }^{2}.
  \label{eq:H}
\end{align}
Then the optimal solution necessarily satisfies the Hamiltonian system
\begin{equation*}
  \dot{q} = \pd{H}{p},
  \qquad
  \dot{p} = -\pd{H}{q},
\end{equation*}
or
\begin{equation*}
  \begin{array}{c}
    \DS \dot{\mathbf{x}} = -\mathbf{A}^{\R^{2}}_{i}(\varphi)\,u^{\star}_{i}(\varphi,\gamma),
    \qquad
    \dot{\varphi}_{i} = u^{\star}_{i}(\varphi,\gamma),
    \medskip\\
    \DS \dot{\mathbf{p}} = \mathbf{0},
    \qquad
    \dot{\gamma}_{i} = \mathbf{p} \cdot \pd{\mathbf{A}^{\R^{2}}_{j}}{\varphi_{i}} u^{\star}_{j}(\varphi,\gamma).
  \end{array}
\end{equation*}
More explicitly, we have
\begin{equation}
  \label{eq:OptimalSystem}
  \begin{array}{c}
    \DS \dot{\mathbf{x}} = -\frac{r\rho}{2h} \parentheses{ u^{\star}_{1}(\varphi,\gamma) + u^{\star}_{2}(\varphi,\gamma) }
    \begin{bmatrix}
      -\sin( c(\varphi_{1} - \varphi_{2}) ) \\
      \cos( c(\varphi_{1} - \varphi_{2}) ) 
    \end{bmatrix},
    \qquad
    \dot{\varphi}_{i} = u^{\star}_{i}(\varphi,\gamma),
    \qquad
    \DS \dot{\mathbf{p}} = \mathbf{0},
    \medskip\\
    \DS
    \begin{bmatrix}
      \dot{\gamma}_{1} \\
      \dot{\gamma}_{2}
    \end{bmatrix}
    = c \frac{r\rho}{2h}
    \parentheses{ p_{1} \cos( c(\varphi_{1} - \varphi_{2}) ) + p_{2} \sin( c(\varphi_{1} - \varphi_{2}) ) }
    \parentheses{ u^{\star}_{1}(\varphi,\gamma) + u^{\star}_{2}(\varphi,\gamma) }
    \begin{bmatrix}
      1 \\
      -1
    \end{bmatrix}.
  \end{array}
\end{equation}

\subsection{Symmetry and Integrability of Optimal Solution}
The Hamiltonian~\eqref{eq:H} is clearly independent of $\mathbf{x}$ and hence the corresponding costate $\mathbf{p} = (p_{1}, p_{2})$ is conserved as one sees in \eqref{eq:OptimalSystem}.
The system also has the following $\mathbb{S}^{1}$ symmetry:
Define an $\mathbb{S}^{1}$ action
\begin{equation*}
  \mathbb{S}^{1} \times (S \times \R^{2}) \to S \times \R^{2};
  \qquad
  (\varphi_{0}, (\varphi_{1}, \varphi_{2}, \mathbf{x})) \mapsto (\varphi_{1} + \varphi_{0}, \varphi_{2} + \varphi_{0}, \mathbf{x}).
\end{equation*}
Its cotangent lift is
\begin{equation*}
  \mathbb{S}^{1} \times T^{*}(S \times \R^{2}) \to T^{*}(S \times \R^{2});
  \qquad
  (\varphi_{0}, (\varphi_{1}, \varphi_{2}, \mathbf{x}, \gamma_{1}, \gamma_{2}, \mathbf{p}))
  \mapsto (\varphi_{1} + \varphi_{0}, \varphi_{2} + \varphi_{0}, \mathbf{x}, \gamma_{1}, \gamma_{2}, \mathbf{p}),
\end{equation*}
and the Hamiltonian~\eqref{eq:H} is invariant under this action.
As a result, $\gamma_{1} + \gamma_{2}$ is conserved as well; this is easy to see directly in \eqref{eq:OptimalSystem} as well.
One may also set, as in \eqref{eq:phi},
\begin{equation}
  \label{eq:phi-sigma}
  \phi_{1} \defeq \varphi_{1} + \varphi_{2},
  \qquad
  \phi_{2} \defeq \varphi_{1} - \varphi_{2},
  \qquad
  \sigma_{1} \defeq \frac{\gamma_{1} + \gamma_{2}}{2},
  \qquad
  \sigma_{2} \defeq \frac{\gamma_{1} - \gamma_{2}}{2}
\end{equation}
so that $(\varphi,\gamma) \mapsto (\phi,\sigma)$ is a canonical change of coordinates.
Then one easily sees that the Hamiltonian~\eqref{eq:H} is independent of $\phi_{1}$ and hence $\sigma_{1}$ is conserved.

Now, the Poisson bracket on $T^{*}(S \times \R^{2})$ is defined as follows: For any $F, G \in C^{\infty}(T^{*}(S \times \R^{2}))$,
\begin{equation*}
  \PB{F}{G} \defeq \pd{F}{q} \cdot \pd{G}{p} - \pd{G}{q} \cdot \pd{F}{p}.
\end{equation*}
It is straightforward to see that the four first integrals
\begin{equation*}
  F_{1} \defeq H,
  \quad
  F_{2} \defeq \gamma_{1} + \gamma_{2},
  \quad
  F_{3} \defeq p_{1},
  \quad
  F_{4} \defeq p_{2}
\end{equation*}
are independent, and also are in involution, i.e., $\PB{F_{i}}{F_{j}} = 0$ for $i, j = 1, \dots 4$.
Hence the system is completely integrable.

\subsection{Exact Solution}
In order to obtain an exact solution to the above system, we reduce it to the equation for a nonlinear pendulum.
To that end, we exploit some of the first integrals from above to rewrite the system~\eqref{eq:OptimalSystem}.

Let us first use $(F_{3}, F_{4}) = (p_{1},p_{2}) = \mathbf{p}$.
Since $\mathbf{p}$ is conserved, we may set
\begin{equation*}
  \mathbf{p} = (p_{1},p_{2}) = |\mathbf{p}| (\cos\delta, \sin\delta),
\end{equation*}
where $|\mathbf{p}|$ and $\delta$ are both constant.
Now let us set
\begin{align*}
  \tilde{\gamma}_{i}
  &\defeq u^{\star}_{i}(\varphi,\gamma) \\
  &= \gamma_{i} - \frac{r\rho}{2h} \parentheses{ p_{1} \sin(c\,\phi_{2}) - p_{2} \cos(c\,\phi_{2}) } \\
  &= \gamma_{i} - \frac{r\rho}{2h} |\mathbf{p}| \sin(c\,\phi_{2} - \delta)
\end{align*}
for $i = 1, 2$.
Then we can write the Hamiltonian $H$ in terms of them as
\begin{equation}
  \label{eq:H-tildegamma}
  H(q,p) = \frac{1}{2} \tilde{\gamma}_{i}^{2}
  = \frac{1}{4}\parentheses{ (\tilde{\gamma}_{1} + \tilde{\gamma}_{2})^{2} + (\tilde{\gamma}_{1} - \tilde{\gamma}_{2})^{2} },
\end{equation}
and also the differential equations for the angles $(\phi_{1}, \phi_{2})$ as
\begin{equation*}
  \dot{\phi}_{1} = \tilde{\gamma}_{1} + \tilde{\gamma}_{2},
  \qquad
  \dot{\phi}_{2} = \tilde{\gamma}_{1} - \tilde{\gamma}_{2}.
\end{equation*}
However, the above expression~\eqref{eq:H-tildegamma} of the Hamiltonian $H$ motivates us to set
\begin{subequations}
  \begin{align}
    \tilde{\gamma}_{1} + \tilde{\gamma}_{2} &= \gamma_{1} + \gamma_{2} - \frac{r\rho}{h} |\mathbf{p}| \sin(c\,\phi_{2} - \delta) = 2\sqrt{H} \cos\theta, \label{eq:tildegamma1+2} \\
    \tilde{\gamma}_{1} - \tilde{\gamma}_{2} &= \gamma_{1} - \gamma_{2} = 2\sqrt{H} \sin\theta
  \end{align}
\end{subequations}
using a new variable $\theta$ so that we have
\begin{equation}
  \label{eq:phi-diffeq}
  \dot{\phi}_{1} = 2\sqrt{H} \cos\theta,
  \qquad
  \dot{\phi}_{2} = 2\sqrt{H} \sin\theta.
\end{equation}
Let us obtain a differential equation for $\theta$.
First observe that
\begin{equation*}
  -\dot{\theta} \csc^{2}\theta
  = \od{}{t} \cot\theta
  = \od{}{t} \parentheses{ \frac{\dot{\phi}_{1}}{\dot{\phi}_{2}} }
  = \od{}{t} \parentheses{ \frac{\tilde{\gamma}_{1} + \tilde{\gamma}_{2}}{\tilde{\gamma}_{1} - \tilde{\gamma}_{2}} }.
\end{equation*}
Let us evaluate the right-hand side.
First rewrite \eqref{eq:H-tildegamma} as
\begin{equation*}
  \frac{1}{2}\parentheses{ \frac{\tilde{\gamma}_{1} + \tilde{\gamma}_{2}}{\tilde{\gamma}_{1} - \tilde{\gamma}_{2}} }^{2}
  = \frac{2H}{(\tilde{\gamma}_{1} - \tilde{\gamma}_{2})^{2}} - \frac{1}{2}.
\end{equation*}
Taking the time derivative of both sides, we have
\begin{equation*}
  (\tilde{\gamma}_{1} + \tilde{\gamma}_{2}) \cdot \od{}{t} \parentheses{ \frac{\tilde{\gamma}_{1} + \tilde{\gamma}_{2}}{\tilde{\gamma}_{1} - \tilde{\gamma}_{2}} }
  = -\frac{4H}{(\tilde{\gamma}_{1} - \tilde{\gamma}_{2})^{2}}\parentheses{ \dot{\tilde{\gamma}}_{1} - \dot{\tilde{\gamma}}_{2} }.
\end{equation*}
However, since $\tilde{\gamma}_{1} - \tilde{\gamma}_{2} = \gamma_{1} - \gamma_{2}$, we have, using \eqref{eq:OptimalSystem},
\begin{align*}
  \dot{\tilde{\gamma}}_{1} - \dot{\tilde{\gamma}}_{2}
  &= \dot{\gamma}_{1} - \dot{\gamma}_{2} \\
  &= c \frac{r\rho}{h} \parentheses{ \gamma_{1} + \gamma_{2} - \frac{r\rho}{h} (p_{1}\sin(c\,\phi_{2}) - p_{2}\cos(c\,\phi_{2})) } (p_{1}\cos(c\,\phi_{2}) + p_{2}\sin(c\,\phi_{2}) ) \\
  &= c \frac{r\rho}{h}|\mathbf{p}| \parentheses{ \tilde{\gamma_{1}} + \tilde{\gamma_{2}} } \cos(c\,\phi_{2} - \delta).
\end{align*}
Therefore,
\begin{align*}
  \od{}{t} \parentheses{ \frac{\tilde{\gamma}_{1} + \tilde{\gamma}_{2}}{\tilde{\gamma}_{1} - \tilde{\gamma}_{2}} }
  &= -\frac{4H}{(\tilde{\gamma}_{1} - \tilde{\gamma}_{2})^{2}} \cdot c \frac{r\rho}{h}|\mathbf{p}| \cos(c\,\phi_{2} - \delta) \\
  &= -\csc^{2}\theta \cdot c \frac{r\rho}{h}|\mathbf{p}| \cos(c\,\phi_{2} - \delta).
\end{align*}
As a result, we obtain
\begin{equation*}
  \dot{\theta} = c \frac{r\rho}{h}|\mathbf{p}| \cos(c\,\phi_{2} - \delta).
\end{equation*}
On the other hand \eqref{eq:tildegamma1+2} gives
\begin{equation*}
  \frac{r\rho}{h} |\mathbf{p}| \sin(c\,\phi_{2} - \delta)
  = \gamma_{1} + \gamma_{2} - 2\sqrt{H} \cos\theta
  = 2\parentheses{ \sigma_{1} - \sqrt{H} \cos\theta },
\end{equation*}
where we used the definition of $\sigma_{1}$ from \eqref{eq:phi-sigma} as well.
Setting
\begin{equation*}
  a \defeq \frac{r\rho}{h} |\mathbf{p}|,
\end{equation*}
we have
\begin{subequations}
  \begin{align}
    \dot{\theta} &= c\,a \cos(c\,\phi_{2} - \delta), \\
    2\parentheses{ \sqrt{H} \cos\theta - \sigma_{1}} &= -a \sin(c\,\phi_{2} - \delta), \label{eq:a-H-sigma_2}
  \end{align}
\end{subequations}
and thus we have
\begin{equation*}
  \dot{\theta}^{2} + 4c^{2}\parentheses{ \sqrt{H} \cos\theta - \sigma_{1} }^{2} = c^{2} a^{2}
\end{equation*}
or
\begin{equation*}
  \parentheses{ \od{\theta}{t} }^{2}
  = c^{2}\parentheses{ a^{2} - 4\parentheses{ \sqrt{H} \cos\theta - \sigma_{1} }^{2} }.
\end{equation*}

Now, assuming $\theta \in (-\pi, \pi)$, we introduce a new variable $\vartheta \in (-\pi, \pi)$ defined by
\begin{equation*}
  \tan(\vartheta/2)
  = \sqrt{ \frac{a + 2(\sqrt{H} + \sigma_{1})}{a - 2(\sqrt{H} - \sigma_{1})} } \tan(\theta/2)
\end{equation*}
or
\begin{equation}
  \label{eq:vartheta}
  \vartheta \defeq 2 \arctan \parentheses{ \sqrt{ \frac{a + 2(\sqrt{H} + \sigma_{1})}{a - 2(\sqrt{H} - \sigma_{1})} } \tan(\theta/2) },
\end{equation}
where we also assumed that $a - 2(\sqrt{H} - \sigma_{1}) > 0$; note that $a + 2(\sqrt{H} + \sigma_{1}) > 0$ follows from \eqref{eq:a-H-sigma_2}.

Then the differential equation for $\vartheta$ is given by
\begin{equation}
  \label{eq:nonlinear_pendulum}
  \parentheses{ \od{\vartheta}{t} }^{2}
  = c^{2}\parentheses{ a^{2} + 4(H - \sigma_{1}^{2}) + 4a\sqrt{H} \cos\vartheta } \\
  = 2(E + A \cos\vartheta),
\end{equation}
where we set
\begin{equation*}
  E \defeq \frac{c^{2}}{2} \parentheses{ a^{2} + 4(H - \sigma_{1}^{2}) },
  \qquad
  A \defeq 2c^{2}a\sqrt{H}.
\end{equation*}
This is the differential equation for a nonlinear pendulum.

For example, if $A < E$, it corresponds to oscillatory solutions of the nonlinear pendulum, and one obtains the solution
\begin{equation*}
  \vartheta(t) = 2 \arcsin\parentheses{ \sn\parentheses{m, F(m,\vartheta_{0}/2) \pm \sqrt{\frac{E + A}{2}}\,t } },
\end{equation*}
where $m \defeq \sqrt{2A/(E + A)} < 1$, and $F$ is the elliptic integral of the first kind, i.e.,
\begin{equation*}
  F(m, \vartheta) \defeq \int_{0}^{\vartheta} \frac{1}{\sqrt{1 - m\sin^{2}\theta}}\,d\theta,
\end{equation*}
and $\sn$ is the Jacobi elliptic function, i.e.,
\begin{equation*}
  \sn^{-1}(m, x) \defeq \int_{0}^{x} \frac{1}{\sqrt{(1 - \xi^{2})(1 - m\,\xi^{2})}}\,d\xi
  = F(m, \sin^{-1}x),
\end{equation*}
that is, $\sn(m, F(m,\vartheta)) = \sin\vartheta$.
Then the angle $\theta$ is given in terms of $\vartheta$ as follows:
\begin{equation*}
  \theta(t) = 2 \arctan \parentheses{ \sqrt{ \frac{a - 2(\sqrt{H} - \sigma_{1})}{a + 2(\sqrt{H} + \sigma_{1})} } \tan(\vartheta(t)/2) }.
\end{equation*}
Therefore, we obtain $\phi_{1}$ and $\phi_{2}$ (and hence $\varphi_{1}$ and $\varphi_{2}$) by quadrature using \eqref{eq:phi-diffeq}; similarly we obtain the position $\mathbf{x}$ of the center of the sphere by quadrature using \eqref{eq:OptimalSystem} as well.

To summarize, we have the following:
\begin{theorem}
  \label{thm:integrability}
  The optimal control problem~\eqref{eq:OCP1} is completely integrable.
  Particularly, if the condition $a - 2(\sqrt{H} - \sigma_{1}) > 0$ is satisfied, then its solution $(\varphi(t), \mathbf{x}(t))$ is obtained by quadrature using a solution of the nonlinear pendulum equation~\eqref{eq:nonlinear_pendulum}.
\end{theorem}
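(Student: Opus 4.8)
The plan is to establish complete integrability in the Liouville sense and then carry out the explicit reduction to quadratures foreshadowed above. The phase space $T^{*}(S \times \R^{2})$ has dimension eight, so it suffices to exhibit four functionally independent first integrals that are pairwise in involution. I would take the four functions already singled out, namely $F_{1} \defeq H$, $F_{2} \defeq \gamma_{1} + \gamma_{2}$, $F_{3} \defeq p_{1}$, and $F_{4} \defeq p_{2}$. Their origin is structural: $F_{3}$ and $F_{4}$ are conserved because the Hamiltonian~\eqref{eq:H} is independent of $\mathbf{x}$, while $F_{2}$ is the momentum map of the $\mathbb{S}^{1}$ action $(\varphi_{1},\varphi_{2}) \mapsto (\varphi_{1}+\varphi_{0}, \varphi_{2}+\varphi_{0})$, under which $H$ is invariant.

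First I would verify involution. The brackets $\PB{F_{2}}{F_{3}} = \PB{F_{2}}{F_{4}} = \PB{F_{3}}{F_{4}} = 0$ are immediate, since $\gamma_{1}+\gamma_{2}$, $p_{1}$, and $p_{2}$ all depend on the momenta alone. The identities $\PB{H}{F_{3}} = \PB{H}{F_{4}} = 0$ restate $\dot{\mathbf{p}} = \mathbf{0}$ from~\eqref{eq:OptimalSystem}, and $\PB{H}{F_{2}} = 0$ is precisely the $\mathbb{S}^{1}$-invariance of $H$, equivalently the independence of $H$ from $\phi_{1}$ in the canonical coordinates~\eqref{eq:phi-sigma}. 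Functional independence holds on the open dense set where $\gamma_{1} \neq \gamma_{2}$: the momentum differentials $\d p_{1}$, $\d p_{2}$, and $\d(\gamma_{1}+\gamma_{2})$ are independent, while $\d H$ carries a $\d(\gamma_{1}-\gamma_{2})$-term with coefficient $(\gamma_{1}-\gamma_{2})/2$ that none of them possesses. This settles the first assertion.

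For the explicit solution I would follow the reduction sketched above. Fixing $\mathbf{p} = \abs{\mathbf{p}}(\cos\delta, \sin\delta)$ and introducing $\tilde{\gamma}_{i} \defeq u_{i}^{\star}$ together with the angle $\theta$ through~\eqref{eq:phi-diffeq}, one rewrites $H = \tfrac{1}{2}\tilde{\gamma}_{i}^{2}$ and differentiates $\cot\theta = (\tilde{\gamma}_{1}+\tilde{\gamma}_{2})/(\tilde{\gamma}_{1}-\tilde{\gamma}_{2})$ along the flow. Using $\tilde{\gamma}_{1}-\tilde{\gamma}_{2} = \gamma_{1}-\gamma_{2}$ and the evolution of $\gamma_{1}-\gamma_{2}$ from~\eqref{eq:OptimalSystem}, this collapses to $\dot{\theta} = c\,a\cos(c\,\phi_{2}-\delta)$ together with the algebraic relation $2(\sqrt{H}\cos\theta - \sigma_{1}) = -a\sin(c\,\phi_{2}-\delta)$, where $a \defeq (r\rho/h)\abs{\mathbf{p}}$. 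Eliminating $\phi_{2}$ through $\cos^{2} + \sin^{2} = 1$ then yields the autonomous equation $\dot{\theta}^{2} = c^{2}\parentheses{ a^{2} - 4(\sqrt{H}\cos\theta - \sigma_{1})^{2} }$.

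The delicate step, and the one I expect to be the main obstacle, is recognizing this as a pendulum. Here the half-angle substitution~\eqref{eq:vartheta} is the crux: it is the tangent substitution that converts the quadratic-in-$\cos\theta$ right-hand side into the linear-in-$\cos\vartheta$ form $\dot{\vartheta}^{2} = 2(E + A\cos\vartheta)$ of~\eqref{eq:nonlinear_pendulum}. Its validity requires $a - 2(\sqrt{H}-\sigma_{1}) > 0$---exactly the hypothesis of the theorem---whereas the companion inequality $a + 2(\sqrt{H}+\sigma_{1}) > 0$ follows from~\eqref{eq:a-H-sigma_2}; confirming that the substitution reproduces precisely the stated $E$ and $A$ is a routine but lengthy trigonometric computation. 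Once in pendulum form, $\vartheta(t)$ is expressed through Jacobi's $\sn$ function, $\theta(t)$ is recovered by inverting~\eqref{eq:vartheta}, and then $\phi_{1},\phi_{2}$ (hence $\varphi_{1},\varphi_{2}$) follow by integrating~\eqref{eq:phi-diffeq} and $\mathbf{x}$ by integrating the first equation of~\eqref{eq:OptimalSystem}---all pure quadratures, completing the proof.
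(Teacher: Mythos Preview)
Your proposal is correct and follows essentially the same route as the paper: the theorem is stated as a summary of the preceding derivation in Sections~6.3--6.4, and you have faithfully reproduced that argument---the four integrals $H,\gamma_{1}+\gamma_{2},p_{1},p_{2}$ in involution, the introduction of $\tilde{\gamma}_{i}$ and the angle $\theta$, the reduction to $\dot{\theta}^{2} = c^{2}\bigl(a^{2} - 4(\sqrt{H}\cos\theta-\sigma_{1})^{2}\bigr)$, and the tangent substitution~\eqref{eq:vartheta} yielding the pendulum equation~\eqref{eq:nonlinear_pendulum}. Your explicit check of functional independence on $\{\gamma_{1}\neq\gamma_{2}\}$ is a welcome addition, since the paper merely asserts independence without justification.
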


\begin{remark}
  The above calculations and the result are reminiscent of a similar result by \citet{Ju1993} (see also \citet[Section~14.3]{Ju1997}) on the plate-ball system---rolling a ball on the plane by moving a plate attached at the top of the ball.
  Specifically, the result says that each extremal path of the center of the sphere---$\mathbf{x}(t) = (x_{1}(t), x_{2}(t))$ in our notation---connecting two translational \textit{and} rotational configurations is Euler's elastica.
  One difference is that our curve is in the shape space or the $\varphi_{1}$-$\varphi_{2}$ plane as opposed to the $x_{1}$-$x_{2}$ plane; another difference is that it is not exactly Euler's elastica.
  Given the differential equation~\eqref{eq:phi-diffeq}, our curve \textit{would be} Euler's elastica if $\theta(t)$ satisfied the nonlinear pendulum equation~\eqref{eq:nonlinear_pendulum}.
  In fact, this is the case with the plate-ball system with $(\varphi_{1},\varphi_{2})$ being replace by $(x_{1},x_{2})$.
  However, in our case, it is $\vartheta$---defined as a slight deformation of $\theta$ in \eqref{eq:vartheta}---that satisfies the nonlinear pendulum equation~\eqref{eq:nonlinear_pendulum}.
  As a result, our curve in the $\varphi_{1}$-$\varphi_{2}$ plane is a slight deformation of Euler's elastica; see Example~\ref{ex:OCP1} below.
\end{remark}

\begin{example}
  \label{ex:OCP1}
  We set the parameters as follows: $r = 1$, $\rho = 0.3$, $h = 0.75$, $w = 0.8$, $J/I_{\rm s} = 5$, and $T = 10$.
  Consider the problem of maneuvering the center of spherical rolling robot from the origin to $(1,1)$ on the $x_{1}$-$x_{2}$ plane after each wheel makes 5 revolutions in displacement (not necessarily the total revolutions), i.e., $\mathbf{x}(0) = (0,0)$, $\varphi(0) = (0, 0)$, $\mathbf{x}(T) = (1,1)$, and $\varphi(T) = (10\pi, 10\pi)$.
  This turns out to be the case with $A < E$ discussed above.
  
  Figure~\ref{fig:OCP1} shows the optimal trajectory of the wheels in the (covering space of) the shape space $S$ and of the center of the sphere in $\R^{2}$.
  \begin{figure}[htbp]
    \centering
    \subfigure[Trajectory of the wheels in the covering space $\R \times \R$ of the shape space $S = \mathbb{S}^{1} \times \mathbb{S}^{1}$.]{
      \includegraphics[width=.465\linewidth]{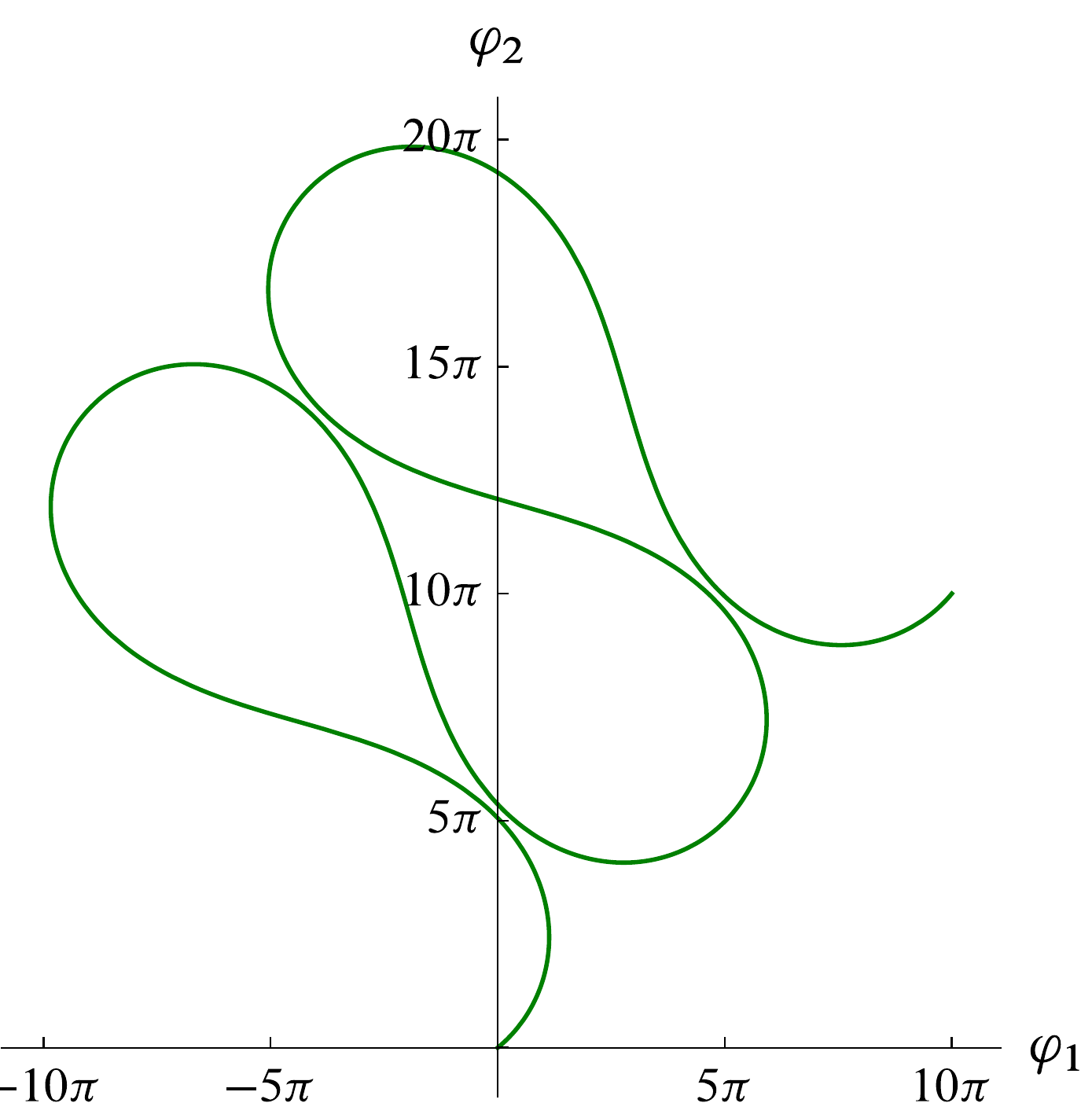}
    }
    \quad
    \subfigure[Trajectory of the center of the sphere in $\R^{2}$]{
      \includegraphics[width=.465\linewidth]{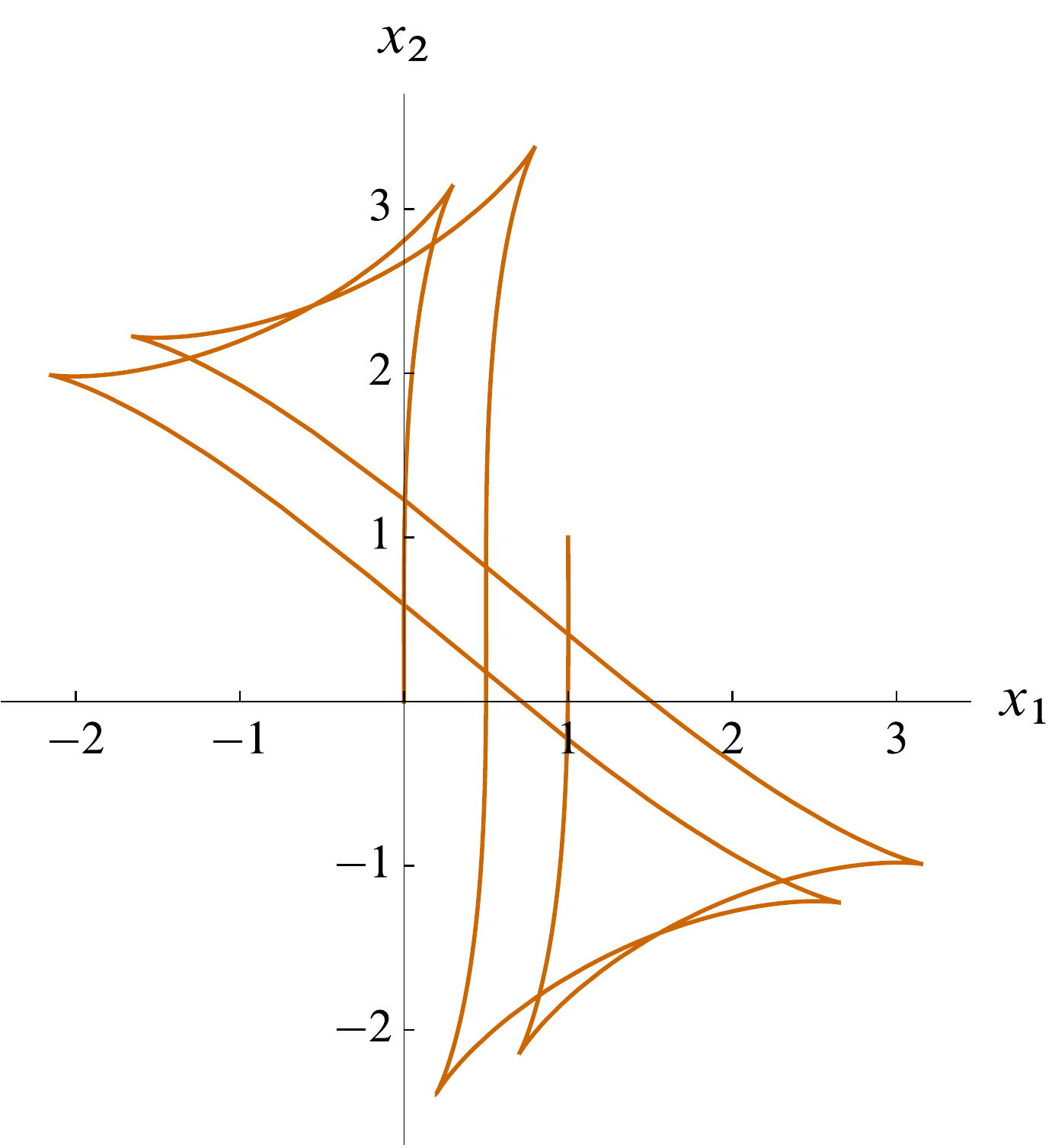}
    }
    \captionsetup{width=0.9\textwidth}
    \caption{
      Solution of the optimal control problem~\eqref{eq:OCP1} for Example~\ref{ex:OCP1}---trajectories of the wheels and of the center of the sphere.
      One can see that the trajectory of the wheels in the $\varphi_{1}$-$\varphi_{2}$ plane is very similar to one of Euler's elasticas.
      On the other hand, the trajectory of the center of the sphere is much more complicated and makes several switches in its direction.
    }
    \label{fig:OCP1}
  \end{figure}
\end{example}

\section*{Acknowledgments}
I would like to thank Vakhtang Putkaradze for his helpful comments and discussions.
This work was partially supported by NSF grant CMMI-1824798.

\bibliography{Sphero}
\bibliographystyle{plainnat}

\end{document}